\definecolor{job}{RGB}{200,65,0}
\definecolor{karin}{RGB}{0,100,200}
\definecolor{notJorK}{RGB}{0,150,0}
\DeclareMathOperator{\Hom}{Hom}
\DeclareMathOperator{\End}{End}
\newcommand{\C}{\mathcal{C}}
\newcommand{\N}{\mathbb{N}}
\newcommand{\kk}{\Bbbk}
\newcommand{\Rad}{\mathsf{Rad}}
\newcommand{\J}{\mathsf{J}}
\newcommand{\Jay}{\mathcal{J}}
\newcommand{\tJay}{\widetilde{\Jay}}
\newcommand{\I}{\mathcal{I}}
\newcommand{\Mod}{\mathsf{Mod}}
\newcommand{\Rep}{\mathsf{Rep}}
\newcommand{\Vc}{\mathsf{Vec}}
\newcommand{\vc}{\mathsf{vec}}
\newcommand{\Q}{\mathcal{Q}}
\newcommand{\Z}{\mathbb{Z}}
\newcommand{\R}{\mathbb{R}}
\newcommand{\Mor}{\mathsf{Mor}}
\newcommand{\Ob}{\mathsf{Ob}}
\DeclareMathOperator{\dist}{\text{d}}
\newtheorem{theorem}{Theorem}[section]
\newtheorem{proposition}[theorem]{Proposition}
\newtheorem{lemma}[theorem]{Lemma}
\theoremstyle{definition}
\newtheorem{definition}[theorem]{Definition}
\newtheorem{remark}[theorem]{Remark}
\newtheorem{example}[theorem]{Example}
\title{Admissible ideals for $\Bbbk$-linear categories}
\author{Karin M.~Jacobsen and Job D.~Rock}
\date{\today}
\begin{document}

\maketitle

\begin{abstract}
    We generalize the notion of an admissible ideal from path algebras to (small) $\kk$-linear categories that satisfy the Krull--Remak--Schmidt--Azumaya assumption.
    In our treatment we first prove some general results that are analogous to general results for path algebras and admissible ideals.
    We then cover generalizations of relations generated by paths of length two, which we call point relations, and more general length relations.
    We conclude the paper with several examples and an appendix containing further discussion on length relations.
\end{abstract}

\section{Introduction}
\label{sec:introduction}
Quivers with relations play a fundamental role in the representation theory of finite dimensional algebras.

Every basic finite dimensional algebra $\Lambda$ over an algebraically closed field $\Bbbk$ is isomorphic to a path algebra $\Bbbk Q/R$ of a finite quiver $Q$ with admissible relations $R$. Moreover, the modules of $\Lambda$ are in bijection with the representations of $(Q,I)$ over $\Bbbk$. Without relations, we get a correspondence only to hereditary algebras \cite{Gabriel}; see also \cite[Ch.~III.1]{ARS}. 

Non-hereditary algebras are central in most fields of modern representation theory of algebras. For one, higher homological algebra requires algebras of global dimension at least 2 \cite{Jasso, Kvamme}. There is a rich tradition of studying classes of non-hereditary algebras, such as gentle \cite{AH, AS}, clannish \cite{C-B}, Schur \cite{Erdmann}, preprojective \cite{Ringel}, and self--injective \cite{SY} algebras.

Continuous quivers and their representations were first explicitly studied in \cite{IRT}. They are a natural generalisation of quivers, replacing finite sets of vertices with uncountably infinite sets. In the process, one gains intuition about what characteristics of representation theory come from innate properties of algebraic structures, and what comes from the discrete examples that are usually studied.

One parameter persistence modules are often defined over the real line so that persistence modules coincide with pointwise finite-dimensional representations of a continuous quiver of type $\mathbb{A}$ (see, for example, \cite{CdSGO}).
In \cite{BBOS} the authors consider $m\times n$ rectangular grid quivers which have the commutativity relation on each square.
The authors of \cite{BBH} study homological approximations in order to obtain new invariants of these representations (persistence modules).

Given the important role of quiver relations in the representation theory of finite-dimensional algebra, it is natural to ask if relations can be extended to the continuous setting. This has already been done in a restricted sense by the second author and Zhu in \cite{RZ}. We give a more general definition that works with any underlying quiver. To capture the full generality we actually go beyond quivers and consider categories instead.

In starting this work, we were motivated by two areas of study that we intend to lift to the continuous setting: gentle algebras and $d$-cluster-tilting subcategories. In gentle algebras, the relations appear in the definition, and are always generated by compositions of two arrows. This type of relations are generalized as \emph{point relations} in \Cref{subsec:point relations}. An important class of $d$-cluster-tilting subcategories appear in the module category of type A algebras, with relations consisting of all paths above a certain length \cite{Vaso}. This type of relations is generalized as \emph{length relations} in \Cref{subsec:length}.

\subsection{Contributions}
In \Cref{sec:definition and general}, we give essential background, before stating our main definition.

\begin{definition}[\Cref{def:admissible}]
Let $\C$ be a category and $\I$ an ideal in $\C$.
    We say $\I$ is \emph{admissible} if the following are satisfied.
    \begin{enumerate}
        \item For each $f$ in $\I$, there exists a finite collection of morphisms $g_1,\ldots,g_n$ not in $\I$ such that $f = g_n\circ\cdots\circ g_1$.
        \item For each nonzero endomorphism $f$, if $f$ is not an isomorphism then there exists $n\geq 2\in\N$ such that $f^n\in\I(X,X)$.
        \end{enumerate}
\end{definition}

\Cref{sec:general results} gives some general results on the quotient category $\C / \I$, summarized here.

\begin{theorem}[\Cref{prop:connected,prop:basic,prop:radical commutes with admissible}]
    Let $\C$ be a  category. Let $\I$ be an admissible ideal of morphisms.
    
    \begin{enumerate}
        \item If $\C$ is connected, then $\C / \I$ is also connected.
        \item If $\C$ is Krull--Remak--Schmidt--Azumaya, then $\C / \mathcal I$ is also Krull--Remak--Schmidt--Azumaya.
        \item If all endomorphism rings of $\C$ are artinian, then \[\Rad(\C / \I)=\Rad(\C) / \I.\]
    \end{enumerate}
\end{theorem}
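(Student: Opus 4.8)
The plan is to reduce everything to one elementary observation and then treat the three parts in turn. The observation is that $\mathrm{id}_X \notin \I$ whenever $X \not\cong 0$: if $\mathrm{id}_X \in \I$, then condition~(1) of admissibility writes $\mathrm{id}_X = g_n \circ \cdots \circ g_1$ with no $g_i$ in $\I$, but $g_1$ has domain $X$, so $g_1 = g_1 \circ \mathrm{id}_X$ lies in the ideal $\I$ --- a contradiction. Consequently the quotient functor $F \colon \C \to \C/\I$ is additive, full, the identity on objects, preserves biproducts and the zero object, and sends nonzero objects to nonzero objects (since $\End_{\C/\I}(X) = \End_\C(X)/\I(X,X)$ is nonzero exactly when $\mathrm{id}_X \notin \I$); in particular $X \cong 0$ in $\C/\I$ if and only if $X \cong 0$ in $\C$.

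For part~(1), I would use the characterization that a category is connected precisely when any two nonzero objects are joined by a finite chain of nonzero objects in which consecutive members admit a nonzero morphism in one direction or the other. Given such a chain $X = Z_0, \dots, Z_m = Y$ in $\C$, refine it step by step: at a step with connecting morphism $h$, if $h \notin \I$ then $F(h) \neq 0$ and the step survives in $\C/\I$; if $h \in \I$, condition~(1) writes $h = g_k \circ \cdots \circ g_1$ with each $g_j \notin \I$, and since each $F(g_j) \neq 0$ the intermediate objects (nonzero, as they carry the $g_j$) give a chain in $\C/\I$ between the ends of the step. Concatenating yields a chain in $\C/\I$ from $X$ to $Y$. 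For part~(2): since $F$ is additive, the identity on objects, and preserves biproducts, a decomposition $X = \bigoplus_{i=1}^n X_i$ in $\C$ with each $\End_\C(X_i)$ local is still a biproduct decomposition in $\C/\I$; each $X_i \not\cong 0$, so $\I(X_i,X_i)$ is a proper two-sided ideal of the local ring $\End_\C(X_i)$ and hence lies in its maximal ideal, so $\End_{\C/\I}(X_i) = \End_\C(X_i)/\I(X_i,X_i)$ is a nonzero quotient of a local ring and therefore local. Thus every object of the additive category $\C/\I$ is a finite direct sum of objects with local endomorphism rings, which is the Krull--Remak--Schmidt--Azumaya condition (the remaining idempotent-completeness being automatic from this decomposition property).

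For part~(3) I would first record the lemma $\I \subseteq \Rad(\C)$, which is where the artinian hypothesis enters; recall that $\Rad(\C)(X,Y) = \{ f : \mathrm{id}_X - g\circ f \text{ is invertible for all } g\colon Y\to X\}$ and $\Rad(\C)(X,X) = J(\End_\C(X))$. Fix $h \in \I(X,X)$ and $a \in \End_\C(X)$. If $\mathrm{id}_X - ah$ is not invertible, it is either $0$ --- then $h$ has a one-sided inverse, hence is invertible since artinian rings are Dedekind-finite, forcing $\mathrm{id}_X \in \I$ --- or a nonzero non-isomorphism, in which case condition~(2) gives $(\mathrm{id}_X - ah)^m \in \I$ for some $m \geq 2$, while $(\mathrm{id}_X - ah)^m = \mathrm{id}_X + b$ with $b \in \I(X,X)$ (as $(ah)^k \in \I(X,X)$ for every $k \geq 1$), so again $\mathrm{id}_X = (\mathrm{id}_X-ah)^m - b \in \I$; both contradict the observation. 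Hence $\mathrm{id}_X - ah$ is always invertible, i.e. $h \in J(\End_\C(X))$, and then for $f \in \I(X,Y)$ and any $g\colon Y\to X$ we get $g\circ f \in J(\End_\C(X))$, so $\mathrm{id}_X - g\circ f$ is invertible and $f \in \Rad(\C)(X,Y)$. With $\I \subseteq \Rad(\C)$ secured, $\Rad(\C)/\I$ is a well-defined ideal of $\C/\I$, and the equality follows from fullness of $F$: for ``$\supseteq$'', apply $F$ to the units $\mathrm{id}_X - g\circ f$ with $f \in \Rad(\C)$ and $g$ a lift of $\bar g$; for ``$\subseteq$'', take a lift $f$ of an element of $\Rad(\C/\I)(X,Y)$ and $g\colon Y \to X$, lift a witness $\bar u$ of the invertibility of $\mathrm{id}_X - \bar g\bar f$ to $u$ in $\C$, observe that $u(\mathrm{id}_X - g\circ f)$ and $(\mathrm{id}_X - g\circ f)u$ each equal $\mathrm{id}_X$ plus an element of $\I(X,X) \subseteq J(\End_\C(X))$ and so are units, whence $\mathrm{id}_X - g\circ f$ is a two-sided unit and the chosen lift lies in $\Rad(\C)(X,Y)$.

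I expect the one genuinely delicate step to be the inclusion $\I \subseteq \Rad(\C)$ in part~(3): without a Krull--Schmidt assumption one cannot argue summand by summand on indecomposables with local endomorphism rings, so the containment must be wrung out of the two admissibility axioms together with Dedekind-finiteness of artinian endomorphism rings, as above. Everything else is bookkeeping, and the one recurring worry --- that some object might collapse to $0$ in $\C/\I$ --- is dispatched once and for all by the opening observation that $\mathrm{id}_X \notin \I$ for $X \not\cong 0$.
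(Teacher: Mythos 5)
Your parts (2) and (3) are sound. Part (2) is essentially the paper's own argument: admissibility (1) forces $\mathrm{id}_{X_i}\notin\I$, so $\End_{\C/\I}(X_i)$ is a nonzero quotient of a local ring and hence local; the only cosmetic slip is writing the decomposition as a finite sum where the Krull--Remak--Schmidt--Azumaya hypothesis allows arbitrary sums, but nothing in your argument uses finiteness. Part (3) is in fact more careful than the paper's proof, which silently needs $\I(Y,Y)\subseteq\J(\End_{\C}(Y))$ to justify the identification $\J(\End_{\C}(Y))/\I=\J(\End_{\C/\I}(Y))$, and whose separate lemma establishing $\I\subseteq\Rad(\C)$ assumes Krull--Remak--Schmidt; your direct derivation of $\I(X,X)\subseteq\J(\End_{\C}(X))$ from the two admissibility axioms (using the $f^{n}\in\I$ form of axiom (2), which is the form stated alongside this theorem) avoids decomposing into indecomposables, and your two-sided lifting of units gives both inclusions cleanly. (In the case $a\circ h=\mathrm{id}_X$ you do not even need Dedekind-finiteness: $\mathrm{id}_X=a\circ h$ already lies in the ideal $\I$.)

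Part (1), however, has a genuine gap. The characterization you invoke --- connected iff any two nonzero objects are joined by a chain of nonzero objects with a nonzero morphism between consecutive members --- is false in an additive category: for any nonzero $X$ and $Y$ the chain $X\to X\oplus Y\to Y$ always exists, so your criterion holds in every additive category, including disconnected ones (the paper's notion of orthogonal decomposition explicitly allows objects such as $X\oplus Y$ to straddle the components). The correct chain criterion must run through \emph{indecomposable} objects, and once you impose that, your refinement step breaks down: when $h\in\I$ is factored as $g_k\circ\cdots\circ g_1$, the intermediate objects need not be indecomposable, and you must choose indecomposable summands of them through which successive $g_j$ have nonzero components \emph{compatibly} --- the nonzero component of $\overline{g_1}$ may land in a summand of the first intermediate object different from the one out of which $\overline{g_2}$ has a nonzero component, and a priori those two summands could sit in different pieces of the putative decomposition of $\C/\I$. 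Handling exactly this is the content of the paper's proof, which argues by contradiction from an orthogonal decomposition of $\C/\I$ and iterates on the components $g_1^k$; it is the step your sketch omits.
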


In \Cref{sec:relations} we give two important classes of relations that can generate admissible ideals. The first is \emph{point relations}, which generalizes relations of length two. The idea is that certain paths through a vertex in the quiver are excluded, but others not. For an illustration see \Cref{fig:point intro}.

\begin{figure}[h!]
    \centering
    \begin{subfigure}[b]{0.3\textwidth}
    \centering
    \begin{tikzpicture}
        \node (center) at (0,0) {$\bullet$};
        \node at (0,.3) {$P$};
        \node (left) at (-1,0) {$\circ$};
        \node (right) at (1,0) {$\circ$};
        \draw[->] (left)--(center);
        \draw[->] (center) -- (right);
    \end{tikzpicture}
    \caption{Discrete case}
    \end{subfigure}
    \begin{subfigure}[b]{0.3\textwidth}
    \centering
    \begin{tikzpicture}[decoration={markings, mark=at position 0.5 with {\arrow{>}}}]
        \node (center) at (0,0) {$\bullet$};
        \node (left) at (-1,0) {};
        \node (right) at (1,0) {};
        \node at (0,.3) {$P$};
        \draw[very thick, postaction=decorate] (left.center) to[bend left] (center.center);
        \draw[very thick, postaction=decorate] (center.center) to[bend right] (right.center);
    \end{tikzpicture}
    \caption{Continuous case}
    \end{subfigure}
    \caption{An illustration of point relations in the discrete and continuous case. In both cases, the relations contain all paths passing through the point $P$.
    See \Cref{fig:arrow and line} on page~\pageref{fig:arrow and line} for an explanation of our drawing conventions.}
    \label{fig:point intro}
\end{figure}
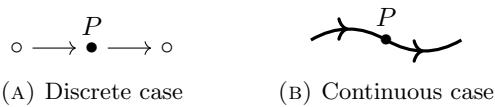

\begin{theorem}[\Cref{thm:point relations are admissible}]
    Let $\{\mathcal P_\alpha\}$ be an admissible collection of point relations in $\C$, such that any cycles are either isomorphisms (and hence trivial) or contained in  at least one $\mathcal P_\alpha$.
    Then $\I=\langle \bigcup_\alpha \mathcal P_\alpha \rangle$ is an admissible ideal in $\C$.
\end{theorem}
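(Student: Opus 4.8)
The plan is to verify directly the two conditions of \Cref{def:admissible} for $\I=\langle\bigcup_\alpha\mathcal P_\alpha\rangle$. Since $\I$ is generated by the point relations, every $f\in\I$ is a finite sum $\sum_i h_i\circ p_i\circ g_i$ in which each $p_i$ lies in some $\mathcal P_{\alpha_i}$ and therefore, by the definition of a point relation, factors as $p_i=b_i\circ a_i$ with $a_i$ a radical morphism into the point $P_{\alpha_i}$ and $b_i$ a radical morphism out of it. Throughout we use that $\C$ is Krull--Remak--Schmidt--Azumaya, so that morphisms may be analysed through their matrix components between indecomposable objects and ideals are compatible with finite biproduct decompositions (working in $\add(\C)$ when convenient).

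For condition~(1), first take a single generator $p=b\circ a\in\mathcal P_\alpha$. This is already a factorization of length two, so it remains only to factor $a$ and $b$ themselves into morphisms not lying in $\I$. If $a\notin\I$ nothing is needed; otherwise $a$ again passes through a forbidden point and may be refactored, and the hypothesis that $\{\mathcal P_\alpha\}$ is \emph{admissible} provides the finiteness (the analogue of $R\subseteq R_Q^2$, so that no radical ``arrow'' lies in $\I$, together with local boundedness) guaranteeing that this refactoring terminates; similarly for $b$. For a general $f=\sum_i h_i\,b_i\,a_i\,g_i\in\I(X,Y)$, bundle the points into $Q=\bigoplus_i P_{\alpha_i}$ and write $f=v\circ u$, where $u\colon X\to Q$ has components $a_i\circ g_i$ and $v\colon Q\to Y$ has components $h_i\circ b_i$. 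Refactoring any component of $u$ or $v$ that lies in $\I$ as above, and inducting on a complexity measure that strictly decreases under this process (for instance the minimal number of point-relation generators needed to express an element), produces the desired factorization $f=g_n\circ\cdots\circ g_1$ with all $g_j\notin\I$.

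For condition~(2), let $f\colon X\to X$ be a nonzero non-isomorphism. Since $\I(X,X)$ is spanned by compositions of radical morphisms it is contained in $\Rad(\End(X))$, so $\I$-membership of endomorphisms is a ``radical'' phenomenon and, using the KRSA structure, it suffices to treat $X$ indecomposable. Then $\End(X)$ is local, so $f$ itself is a radical endomorphism, i.e.\ a nontrivial cycle at $X$. By the hypothesis on cycles, $f$ is not an isomorphism, hence lies in at least one $\mathcal P_\alpha$ and therefore in $\I$; in particular $f^2\in\I(X,X)$, establishing~(2) with $n=2$.

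The step I expect to be the main obstacle is the bookkeeping in condition~(1): one must use the definition of an admissible collection of point relations precisely enough both to know that refactoring an in-$\I$ factor terminates and to verify that the global factorization obtained by bundling does not reintroduce an in-$\I$ morphism among the $g_j$. This mirrors the delicate part of the classical fact that admissible relations generate an admissible ideal, and it is where the full strength of the hypotheses on $\{\mathcal P_\alpha\}$ and on cycles is consumed. Condition~(2) is comparatively soft, being forced once the cycle hypothesis and the reduction to indecomposable objects are in place.
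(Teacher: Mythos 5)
There is a genuine gap in your treatment of condition (1), and it is precisely the step you flag as the ``main obstacle.'' The argument hinges on a termination claim that you never make precise, and the explanation you offer for why it terminates misidentifies the actual hypothesis. In \Cref{def:admissible point relations}, that a collection $\{\mathcal P_\alpha\}$ of point relations is \emph{admissible} means that each morphism of indecomposables lies in at most finitely many of the $\mathcal P_\alpha$; it has nothing to do with ``no radical arrow lies in $\I$'' or with local boundedness. In fact local boundedness is explicitly \emph{not} available here: the paper points out (after \Cref{def:stem category}) that most of its categories of interest, such as the $\Bbbk$-linearization of $\R$, fail to be locally bounded. So you cannot lean on that hypothesis. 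Moreover, the complexity measure you propose as ``for instance'' -- the minimal number of generators needed to express an element -- does not obviously decrease when you refactor a factor $g_1\in\I$; nothing prevents $g_1$ from requiring as many generators as $f$ did. The paper's proof uses a different and correct measure: after reducing to $f$ between indecomposables, one inducts on the (finite, by admissibility of the collection) number of $\mathcal P_\alpha$ that contain $f$. The key observation making the induction go through is that when $f=g_2\circ g_1$ is the factorization through the decomposition point $Z_1$ of $\mathcal P_1$, neither $g_i$ lies in $\mathcal P_1$, while any $\mathcal P_\beta$ containing $g_i$ must also contain $f=g_2 g_1$ since $\mathcal P_\beta$ is an ideal; hence the set of $\mathcal P_\alpha$'s containing $g_i$ is a proper subset of those containing $f$. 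Without some such strictly decreasing invariant your refactoring loop is not known to halt.

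Two smaller remarks. First, the version of the theorem actually proved in the body of the paper (\Cref{thm:point relations are admissible}) takes as a \emph{hypothesis} that $\End_{\C}(X)/\I(X,X)$ is finite-dimensional for each indecomposable $X$, and then condition (2) of \Cref{def:admissible} holds by fiat; your proof of condition (2) from the cycle hypothesis addresses a different (weaker) formulation. Second, your reduction of condition (2) to indecomposable $X$ ``using the KRSA structure'' is not automatic: for decomposable $X$, a nontrivial idempotent is a non-isomorphism whose powers never enter $\I$, so condition (2) must be read as a statement about indecomposables in the first place, not something one reduces to.
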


The other class of relations we define are \emph{length relations}. This is a generalisation of relations generated by paths containing at least $n$ arrows, where $n$ is a natural number.

\begin{theorem}[\Cref{thm:length relations are admissible}]
    A length relation generates an admissible ideal.
\end{theorem}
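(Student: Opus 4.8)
The plan is to verify the two conditions of \Cref{def:admissible} for the ideal $\I$ generated by a length relation, working from the structural description of such relations. First I would recall that a length relation carries a length function $\ell$ on morphisms that is compatible with the radical filtration $\C\supseteq\Rad(\C)\supseteq\Rad^2(\C)\supseteq\cdots$, satisfies $\ell(g\circ h)\geq\ell(g)+\ell(h)$, assigns length one to the irreducible morphisms, assigns length zero exactly to the isomorphisms, and is such that $\I$ is generated by the morphisms of length at least a threshold $N\geq 2$; in particular every morphism of length strictly less than $N$ lies outside $\I$.

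For condition (1) I would take $f\in\I$ and use the defining description of the length relation to write it as a finite sum of composites each having at least $N$ irreducible factors, and then assemble this into a single composite: if $\C$ is additive one bundles the finite sum through a finite biproduct, using that a matrix morphism is radical precisely when its entries are; otherwise one argues directly from the generating composites. Peeling off one irreducible factor at a time then exhibits $f=g_m\circ\cdots\circ g_1$ in which the first $m-1$ factors are irreducible (length one, hence not in $\I$) and the last factor has length $N-1<N$ (hence also not in $\I$). This is the step I expect to be the main obstacle: converting an arbitrary element of the ideal---a priori only a linear combination of long composites, sandwiched by arbitrary morphisms---into a single composite all of whose factors are short, while keeping the length bookkeeping consistent. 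It is exactly here that the precise formulation of \emph{length relation} must do the work, and it is also the point most sensitive to the passage from the discrete to the continuous setting, where one must know $\ell$ is well defined.

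For condition (2) I would take a nonzero non-isomorphism $f\colon X\to X$ and use the Krull--Remak--Schmidt--Azumaya hypothesis to reduce to $X$ indecomposable, so that $\End_\C(X)$ is local and $f$ lies in its radical; hence $f\in\Rad(\C)(X,X)$, so $\ell(f)\geq 1$ and therefore $\ell(f^m)\geq m$ for all $m$. Choosing $m\geq\max(2,N)$ gives $f^m\in\I(X,X)$, as required---note that no nilpotence of $f$ is needed, only that its length grows. If the length threshold is permitted to depend on the object, one instead takes $m$ past the threshold attached to $X$, which one should check is finite. Combining the two verifications shows that $\I$ is admissible.
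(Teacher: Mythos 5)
Your proof is built on the classical, discrete notion of a length relation (lengths valued in $\N$, irreducible morphisms of length $1$, an integer threshold $N\geq 2$), but the theorem's notion is much broader: lengths take values in an arbitrary weakly Archimedean monoid $\Lambda$ (e.g.\ $\R_{\geq 0}$), the ideal is generated by $\ell^{-1}(\Lambda_2)$ for a cut $\Lambda=\Lambda_1\amalg\Lambda_2$ that need not be realized by any threshold element (\Cref{rmk:length is not a number}), and in the continuous examples that motivate the definition there are \emph{no} irreducible morphisms at all. Consequently your verification of condition (1) --- peeling off one irreducible factor at a time --- fails outright in exactly the cases the theorem is designed for. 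The paper's argument instead picks any $\lambda>0$ in $\Lambda_1$, invokes the weakly Archimedean axiom to find $n$ with $n\lambda\geq\ell(f)$, and then uses axiom (3) of \Cref{def:category with length in Lambda} (every morphism can be split at any prescribed intermediate length) to write $f=g_n\circ\cdots\circ g_1$ with each $\ell(g_i)\in\Lambda_1$, hence each $g_i\notin\I$. Note also that the paper requires \emph{additivity} $\ell(g\circ f)=\ell(g)+\ell(f)$, not merely the superadditivity you assume; with only the inequality you could not conclude that the factors are short.

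For condition (2) you prove the wrong property. \Cref{def:admissible}(2) as used in the body of the paper asks that $\End_{\C}(X)/\I(X,X)$ be finite-dimensional for each indecomposable $X$, not that powers of non-isomorphisms eventually land in $\I$. This finite-dimensionality genuinely requires the hypothesis, absent from your argument, that $\End_{\widehat{\C}}(X)$ is a finitely generated monoid; \Cref{ex:big wedge} shows the conclusion is false without it. The paper bounds $\dim_\Bbbk(\End_{\C}(X)/\I(X,X))$ by $m\cdot N+1$, where $m$ is the number of monoid generators and $N$ is the largest power needed to push a generator's length into $\Lambda_2$ (which exists by the weakly Archimedean axiom). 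Even for the nilpotence-mod-$\I$ statement you do prove, the step ``$\ell(f)\geq 1$, hence $\ell(f^m)\geq m\geq N$'' again presupposes $\Lambda=\N$; in general one only has $\ell(f)>0$ and must appeal to the weakly Archimedean property to find $n$ with $n\ell(f)\in\Lambda_2$. Your instinct that an arbitrary element of $\I$ (a sandwiched linear combination of generators) needs separate treatment in condition (1) is fair --- the paper itself only treats $f\in\Mor(\widehat{\C})$ explicitly --- but that observation does not repair the two gaps above.
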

\Cref{sec:examples} contains multiple examples of how relations work, including a sketch of their Auslander--Reiten theory.

\subsection{Future Work}
The present paper is a precursor to future work on generalizations of non-hereditary structures.
Of note, the authors will consider point relations, such as \Cref{ex:crossing real lines}, that generalize gentle algebras. They will also study the modding out by length relations, such as \Cref{ex:length relations}(\ref{ex:length relations:continuous A}), to generate higher cluster tilting subcategories.

\subsection{Acknowledgements}
The idea for this project was conceived at the Hausdorff Research Institute of Mathematics, KMJ visited JDR at Ghent University during this project, and JDR visited KMJ at Aarhus University during this project.
The authors thank each of these institutions for their hospitality.
KMJ is supported by the Norwegian Research Council via the project Higher Homological Algebra and Tilting Theory (301046).
JDR is supported at Ghent University by BOF grant 01P12621.
The authors would like to thank Jenny August, Raphael Bennett-Tennenhaus, Charles Paquette, Amit Shah, Emine Y{\i}ld{\i}r{\i}m, and Shijie Zhu for helpful discussions.

\subsection{Conventions}
We work over $\kk=\overline{\kk}$ be a field of characteristic 0.
By $\Vc(\kk)$ and $\vc(\kk)$ we denote the categories of $\kk$-vector spaces and finite-dimensional $\kk$-vector spaces, respectively.
For a $\kk$-algebra $\Lambda$, denote by $\J(\Lambda)$ the Jacobson radical of $\Lambda$.
We assume $\C$ is a $\kk$-linear category.

Recall that a category $\C$ is called \emph{Krull--Remak--Schmidt--Azumaya} if any object $X$ is isomorphic to a arbitrary sum $\bigoplus X_i$, where each $\End_\C X_i$ is a local ring, which itself is unique up to isomorphism.
If every object is instead isomorphic to a finite sum $\bigoplus_{i=1}^n X_i$ as above, we say $\C$ is \emph{Krull--Remak--Schmidt}.

Finally, we consider (discrete) quivers, continuous generalizations of such quivers, and combinations of the two.
When we draw an arrow, we  use a thin line with an arrow head at the end to indicate the direction.
When we draw a continuous line segment, we use a bold line with the arrow head in the middle to indicate the direction;
see \Cref{fig:arrow and line}.

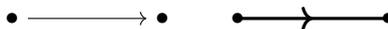
\begin{figure}[h]
    \centering
    \begin{tikzpicture}[decoration={markings, mark=at position 0.5 with {\arrow{>}}}]
        \node (left-start) at (0,0) {$\bullet$};
        \node (left-end) at (2,0) {$\bullet$};
        \node (right-start) at (3,0) {$\bullet$};
        \node (right-end) at (5,0) {$\bullet$};
        \draw[->] (left-start)--(left-end);
        
        \draw[very thick, postaction=decorate] (right-start.center)--(right-end.center);
        
    \end{tikzpicture}
    \caption{On the left, how we draw arrows. On the right, how we draw line segments.}
    \label{fig:arrow and line}
\end{figure}

\section{Definition and General Results}\label{sec:definition and general}

\subsection{$\kk$-linear categorization}

\begin{definition}\label{def:categorification}
    Let $Q$ be a (finite) quiver and $\kk Q$ its path algebra.
    Let $\Q$ be the category whose indecomposable objects are the vertices of $Q$ and morphisms between indecomposables $i$ and $j$ are given by
    \begin{displaymath}
        \Hom_{\Q}(i,j) = e_j \kk Q e_i.
    \end{displaymath}
    The objects in $\Q$ are finite direct sums of the indecomposables (and 0).
    The morphisms in $\Q$ are given by extending bilinearly.
    We call $\Q$ the \emph{$\kk$-linear categorification} of $Q$.
\end{definition}

\begin{example}\label{ex:discrete categorification}
Let $Q$ be the following quiver:

\begin{center}
\begin{tikzpicture}[xscale=1.5, yscale=.5]
    \node (1) at (0,1) {1};
    \node (2) at (1,2) {2};
    \node (3) at (1,0) {3};
    \node (4) at (2,1) {4};
    
    \draw[->] (1) -- node[pos=0.6, above]{$\alpha_1$} (2);
    \draw[->] (1) -- node[pos=0.6, below]{$\beta_1$} (3);
    \draw[->] (2) -- node[pos=0.4, above]{$\alpha_2$} (4);
    \draw[->] (3) -- node[pos=0.4, below]{$\beta_2$} (4);
\end{tikzpicture}
\end{center}
Then the $\kk$-linear categorification $\Q$ is a category with indecomposable objects $1, 2, 3$ and $4$. The morphisms in $\Q$ are given by paths in $Q$, so for example we have $\Hom(1,4) \cong\kk^2$, while  $\Hom(4,1) = 0$
\end{example}

\begin{proposition}\label{lem:correspondence between paths and morphisms}
    There is a bijection between nonzero elements in $\kk Q$ and nonzero morphisms in $\Q$.
\end{proposition}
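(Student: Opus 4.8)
The plan is to exhibit the bijection explicitly and then check it is well-defined and bijective by unwinding the definitions. A general element of $\kk Q$ is a $\kk$-linear combination of paths, and each path $p$ from vertex $i$ to vertex $j$ lies in the summand $e_j\kk Qe_i = \Hom_{\Q}(i,j)$. So writing an arbitrary element $x\in\kk Q$ according to its components $x = \sum_{i,j} e_j x e_i$, I would send $x$ to the morphism in $\Q$ whose $(i,j)$-component (as a map $i\to j$ in the direct-sum decomposition) is precisely $e_j x e_i\in\Hom_\Q(i,j)$. Conversely, a morphism in $\Q$ between arbitrary objects $\bigoplus_k i_k$ and $\bigoplus_\ell j_\ell$ is, by the definition of the morphisms in $\Q$ as bilinear extensions, a matrix of elements of the spaces $e_{j_\ell}\kk Qe_{i_k}$, and summing these matrix entries inside $\kk Q$ produces an element of $\kk Q$; this is the inverse assignment.

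The key steps, in order, are: (1) recall that $\kk Q = \bigoplus_{i,j} e_j\kk Qe_i$ as a $\kk$-vector space, so that specifying $x$ is the same as specifying all its components $e_j x e_i$; (2) observe that $\Hom_\Q(i,j)$ is by definition that very summand $e_j\kk Q e_i$, so the indecomposable-to-indecomposable case of the correspondence is literally an equality, not merely a bijection; (3) extend to arbitrary objects: since every object of $\Q$ is a finite direct sum of indecomposables and morphisms are obtained by extending bilinearly, a morphism $\bigoplus_k i_k\to\bigoplus_\ell j_\ell$ is uniquely a collection $(f_{\ell k})$ with $f_{\ell k}\in\Hom_\Q(i_k,j_\ell)$, and the total element $\sum_{\ell,k} f_{\ell k}\in\kk Q$ recovers $x$ from its components; (4) check that zero corresponds to zero in both directions, which gives the restriction to \emph{nonzero} elements and morphisms claimed in the statement; one should note that a single element $x\in\kk Q$ can be viewed as a morphism between several different pairs of objects (any objects containing all the relevant indecomposable summands), so to get a genuine bijection one works with the canonical choice where the source and target are the direct sums of exactly the indecomposables appearing in $x$, or else phrases the statement as identifying $\kk Q$ with $\bigoplus_{i,j}\Hom_\Q(i,j)$.

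I expect the main obstacle to be purely bookkeeping: making precise what "a morphism in $\Q$" means once objects are arbitrary finite direct sums, and ensuring the source/target ambiguity (an element of $\kk Q$ does not on its own determine which objects it is a morphism between) is resolved so that the correspondence is an honest bijection rather than a surjection with fibers. Once that convention is fixed — identify $x$ with the morphism between the minimal objects supporting it — well-definedness and bijectivity both follow immediately from step (1) and step (2), since the whole construction is just reorganizing the same direct-sum decomposition of $\kk Q$ on the two sides. No nontrivial computation is needed; the content is entirely in setting up the dictionary between the path-algebra grading $\kk Q = \bigoplus_{i,j} e_j\kk Qe_i$ and the $\Hom$-space description of $\Q$.
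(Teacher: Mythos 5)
Your proposal is correct and follows essentially the same route as the paper's proof, which also rests entirely on the defining identity $\Hom_{\Q}(i,j)=e_j\kk Qe_i$ together with bilinear extension to direct sums. You are in fact somewhat more careful than the paper, which does not address the point you raise about an element of $\kk Q$ being viewable as a morphism between several pairs of objects; your resolution (fix the minimal supporting objects, or read the statement as an identification $\kk Q\cong\bigoplus_{i,j}\Hom_{\Q}(i,j)$) is the right one.
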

\begin{proof}
    A non-zero element in $\kk Q$ is a finite sum of paths in $Q$. We can therefore define a map $F$ from the elements in $\kk Q$ to morphisms in $\Q$ by specifying the action of $F$ on paths in $Q$. We let this mapping be determined by $\Hom_{\Q}(i,j) = e_j \kk Q e_i$. This map is a bijection by bilinearity of $\Q$.
\end{proof}

\begin{lemma}\label{prop:same representations}
    Let $\Mod(\Q)$ be the category of functors $\Q\to \Vc(\kk)$.
    Then there exists an isomorphism of categories $\Phi:\Mod(\Q) \to \Rep(Q)$.
\end{lemma}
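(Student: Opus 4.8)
The plan is to build $\Phi$ and its inverse $\Psi$ by hand and check that both are functors and that they are mutually inverse. First I would define $\Phi$ on objects: given a functor $M\colon\Q\to\Vc(\kk)$, let $\Phi(M)$ be the representation of $Q$ whose vector space at a vertex $i$ is $M(i)$ and whose linear map along an arrow $\alpha\colon i\to j$ is $M(\alpha)\colon M(i)\to M(j)$, where $\alpha$ is viewed as an element of $\Hom_\Q(i,j)=e_j\kk Q e_i$ through the length-one path it determines. On morphisms, a natural transformation $\eta\colon M\Rightarrow N$ has components $\eta_i\colon M(i)\to N(i)$, and naturality with respect to the arrows of $Q$ says precisely that the tuple $(\eta_i)_i$ is a morphism of representations; so I set $\Phi(\eta)=(\eta_i)_i$. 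That $\Phi$ preserves identities and composition is immediate, since both composition of natural transformations and composition of representation morphisms are computed componentwise.

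Next I would construct $\Psi\colon\Rep(Q)\to\Mod(\Q)$. Given a representation $V$ with spaces $V_i$ and maps $V_\alpha$, define $\Psi(V)$ on the indecomposable object $i$ by $i\mapsto V_i$, on a direct sum of indecomposables by the corresponding direct sum of vector spaces, and on morphisms by sending a path $p=\alpha_n\cdots\alpha_1$ from $i$ to $j$ to $V_{\alpha_n}\circ\cdots\circ V_{\alpha_1}$, then extending $\kk$-linearly over $\Hom_\Q(i,j)=e_j\kk Q e_i$ and then to arbitrary objects via the evident block description. The substantive point is that $\Psi(V)$ is a functor: it sends the trivial path $e_i$, which is $\mathrm{id}_i$ in $\Q$, to $\mathrm{id}_{V_i}$, and it respects composition. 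By $\kk$-bilinearity and the block description, respecting composition reduces to the claim that for composable paths $p$ and $q$ one has $\Psi(V)(q\circ p)=\Psi(V)(q)\circ\Psi(V)(p)$; by \Cref{lem:correspondence between paths and morphisms} the composition $q\circ p$ in $\Q$ is exactly the concatenated path in $\kk Q$, so this is just associativity of composition of the linear maps $V_\alpha$. On morphisms of representations $\Psi$ acts componentwise and is clearly functorial.

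Finally I would check $\Phi$ and $\Psi$ are mutually inverse. On objects this is essentially by construction: $\Phi(\Psi(V))$ has vertex spaces $V_i$ and arrow maps $V_\alpha$, hence equals $V$; and $\Psi(\Phi(M))$ agrees with $M$ on indecomposables and on every path of $Q$, hence by $\kk$-linearity and additivity on all of $\Q$, so it equals $M$. On morphisms both composites are the identity, since passing back and forth only remembers and reassembles the components at the vertices.

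The step I expect to be the main obstacle is the functoriality of $\Psi(V)$: showing that composition of morphisms in $\Q$, which is a priori a formal operation on $\kk$-linear combinations of paths, is carried to honest composition of linear maps. This is exactly where \Cref{lem:correspondence between paths and morphisms} and the (relation-free) definition of a quiver representation enter; everything else — handling direct sums, bilinear extension, and componentwise checks — is routine bookkeeping. A secondary subtlety is to make the object-level assignments of $\Phi$ and $\Psi$ into genuine mutually inverse bijections rather than merely a natural isomorphism, which one arranges by fixing once and for all the chosen direct sums used to describe the objects of $\Q$ and the values of $\Psi(V)$, so that the block descriptions match on the nose.
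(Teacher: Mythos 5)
Your proposal is correct and follows essentially the same route as the paper: both construct $\Phi$ and its inverse explicitly on vertices, paths, and components of morphisms, and verify they are mutually inverse. You simply supply more detail than the paper does on the functoriality of the inverse (via \Cref{lem:correspondence between paths and morphisms}) and on the strictness needed for an isomorphism rather than an equivalence.
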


\begin{proof}
    Let $F$ be a functor in $\Mod(\Q)$.
    We now define the corresponding representation $V=\Phi(F)$.
    Let $M$ be the representation of $Q$ over $\kk$ where $V(i)=F(i)$ for each $i\in Q_0$.
    For a path $\rho$ in $Q$, let $V(\rho)$ be the $\kk$-linear map $F(\rho)$.
    
    Let $f:F\to G$ be a morphism in $\Mod(\Q)$.
    Then $\Phi(f):\Phi(F)\to \Phi(G)$ is defined by the $f_i:F(i)\to G(i)$ for each $i\in Q_0$.
    Straightforward computations show that $\Phi$ respects composition and so it is a functor.
    
    Define $\Phi^{-1}:\Rep(Q)\to \Mod(\Q)$ in the following way.
    For a representation $V$ of $Q$, let $F=\Phi^{-1}(V)$ be determined by $F(i)=V(i)$, for each $i\in Q_0$, and $F(\rho)=V(\rho)$ for each path in $Q$.
    Morphisms are defined similarly.
    One may check $\Phi^{-1}\Phi$ and $\Phi\Phi^{-1}$ are the identity functors on $\Mod(\Q)$ and $\Rep(Q)$, respectively.
\end{proof}

\subsection{The Jacobson Radical}\label{sec:jacobson radical}

\begin{definition}\label{def:radical of C}
    Let $\C$ be a category.
    The \emph{radical} $\Rad(\C)$ of $\C$ is the ideal consisting of
    \begin{displaymath}
        \Rad_{\C}(X,Y) := \left\{ f\in\Hom_{\C}(X,Y) \mid \forall g\in\Hom_{\C}(Y,X),\, f\circ g\in\J(\End_{\C}(Y))  \right\},
    \end{displaymath}
    for each pair of objects $X,Y$ in $\C$.
\end{definition}

\begin{proposition}[\cite{Krause}]\label{prop:Krause}
    Let $X$ and $Y$ be objects in $\C$.
    Then $\Rad_{\C}(X,Y)=\J(\Hom_{\C}(X,Y))$.
\end{proposition}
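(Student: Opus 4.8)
The plan is to realise $\J(\Hom_\C(X,Y))$ as a corner of an ordinary Jacobson radical and then compare it with $\Rad_\C(X,Y)$ directly. Passing if necessary to the additive completion of $\C$ (which changes neither $\End_\C X$, nor $\End_\C Y$, nor $\Hom_\C(X,Y)$), I may assume the biproduct $X\oplus Y$ exists. Set $\Lambda:=\End_\C(X\oplus Y)$ and let $e=e_X$, $e'=e_Y$ be the two orthogonal idempotents with $e+e'=1_\Lambda$. Then there are identifications $e\Lambda e\cong\End_\C X$, $e'\Lambda e'\cong\End_\C Y$, $e'\Lambda e\cong\Hom_\C(X,Y)$ and $e\Lambda e'\cong\Hom_\C(Y,X)$ of bimodules, compatible with composition. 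Under these identifications $\J(\Hom_\C(X,Y))$ means $e'\J(\Lambda)e$ (the corner of the Jacobson radical; this is the standard interpretation and is what the proposition is really pinning down), so the claim becomes the equality $\Rad_\C(X,Y)=e'\J(\Lambda)e$ inside $\Lambda$.

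First I would record a symmetric reformulation of the radical: $f\in\Rad_\C(X,Y)$ if and only if $g\circ f\in\J(\End_\C X)$ for every $g\in\Hom_\C(Y,X)$. This uses two elementary facts: (i) for $u\colon X\to Y$ and $v\colon Y\to X$, $1_Y-uv$ is invertible if and only if $1_X-vu$ is (the inverse of each is written explicitly from the other); and (ii) $a\in\J(R)\iff 1-ba$ is a unit for all $b\in R$. Indeed, if $f\circ g\in\J(\End_\C Y)$ for all $g$, then for any $b\in\End_\C X$ and $a\colon Y\to X$ the element $1_Y-f\circ(b\circ a)$ is a unit, hence $1_X-(b\circ a)\circ f$ is a unit; as $b$ varies this gives $a\circ f\in\J(\End_\C X)$. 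The converse is symmetric.

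For $\Rad_\C(X,Y)\subseteq e'\J(\Lambda)e$, take $f\in\Rad_\C(X,Y)$ and regard it as $e'fe\in\Lambda$. For arbitrary $h\in\Lambda$, the product $h\cdot(e'fe)$ only has entries in the column of $e$, and (with $X$ listed first) $1_\Lambda-h(e'fe)$ is block lower triangular with diagonal blocks $1_X-(ehe')\circ f$ and $1_Y$. Since $ehe'\in\Hom_\C(Y,X)$, the symmetric reformulation gives $(ehe')\circ f\in\J(\End_\C X)$, so $1_X-(ehe')\circ f$ is invertible and therefore so is $1_\Lambda-h(e'fe)$; as $h$ was arbitrary, $e'fe\in\J(\Lambda)$, i.e. $f\in e'\J(\Lambda)e$. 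For the reverse inclusion, take $f\in e'\J(\Lambda)e$; then $f\in\J(\Lambda)$ (a two-sided ideal), so for any $g\colon Y\to X$ the composite $f\circ g\in e'\Lambda e'$ also lies in $\J(\Lambda)$, whence $f\circ g\in e'\Lambda e'\cap\J(\Lambda)=e'\J(\Lambda)e'=\J(e'\Lambda e')\cong\J(\End_\C Y)$, using the standard identity $\J(\varepsilon\Lambda\varepsilon)=\varepsilon\J(\Lambda)\varepsilon$ for an idempotent $\varepsilon$. Hence $f\circ g\in\J(\End_\C Y)$ for all $g$, i.e. $f\in\Rad_\C(X,Y)$.

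The genuinely delicate points are only bookkeeping: keeping the idempotents $e,e'$ on the correct sides in the corner computation, and correctly invoking the two ring-theoretic facts ($1-uv$ versus $1-vu$, and $\J(\varepsilon\Lambda\varepsilon)=\varepsilon\J(\Lambda)\varepsilon$); nothing here is deep. Since this is exactly the argument of \cite{Krause}, one may alternatively simply cite it, but the self-contained version above is short.
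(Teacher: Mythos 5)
The paper offers no proof of this proposition at all: it is stated with a bare citation to Krause's \emph{Krull--Schmidt categories and projective covers}, so there is no in-paper argument to compare yours against. Your self-contained proof is correct. The one interpretive choice you had to make --- what $\J(\Hom_{\C}(X,Y))$ means when $\Hom_{\C}(X,Y)$ is not a ring --- is resolved the standard way (the corner $e'\J(\Lambda)e$ of $\Lambda=\End_{\C}(X\oplus Y)$, equivalently the Jacobson radical of the additive category evaluated at $(X,Y)$), which is exactly the meaning in Krause. The three ingredients all check out: the symmetric reformulation of $\Rad_{\C}(X,Y)$ via the $1-uv$ versus $1-vu$ trick is needed because the paper's \Cref{def:radical of C} tests $f\circ g\in\J(\End_{\C}Y)$ while your triangular-matrix computation naturally produces the diagonal block $1_X-(ehe')\circ f$ in $\End_{\C}X$; the block-lower-triangular invertibility argument correctly yields $e'fe\in\J(\Lambda)$; and the reverse inclusion correctly uses that $\J(\Lambda)$ is a two-sided ideal together with $\J(\varepsilon\Lambda\varepsilon)=\varepsilon\J(\Lambda)\varepsilon$. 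This is in substance the standard proof from the literature, so while citing [Krause] (as the paper does) suffices, your version is a valid replacement.
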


\begin{proposition}\label{prop:easy radical}
    Let $f:X\to Y$ be an morphism for indecomposable objects $X,Y$ in $\C$.
    Then $f\in\Rad(\C)$ if and only if $f$ is not an isomorphism.
\end{proposition}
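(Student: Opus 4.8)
The plan is to argue directly from \Cref{def:radical of C}, using the fact that for an indecomposable object $Z$ of $\C$ the ring $\End_\C(Z)$ is local, so that $\J(\End_\C(Z))$ is precisely the set of non-units and in particular $\mathrm{id}_Z \notin \J(\End_\C(Z))$ (as $Z \neq 0$). The two implications are quite asymmetric in difficulty.

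For ``$f \in \Rad(\C) \Rightarrow f$ not an isomorphism'' I would suppose, for contradiction, that $f$ is an isomorphism with inverse $g \colon Y \to X$, and apply the defining condition of $\Rad_\C(X,Y)$ to this particular $g$: it yields $f \circ g = \mathrm{id}_Y \in \J(\End_\C(Y))$, which is impossible. This direction needs essentially nothing beyond the definition.

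For the converse, assume $f$ is not an isomorphism; I must show $f \circ g \in \J(\End_\C(Y))$ for every $g \colon Y \to X$. Since $\End_\C(Y)$ is local it is enough to rule out $f \circ g$ being a unit. Suppose it is, with two-sided inverse $u$, and set $g' := g \circ u$, so that $f \circ g' = \mathrm{id}_Y$ exhibits $f$ as a split epimorphism. The crux is to observe that $e := g' \circ f \in \End_\C(X)$ is idempotent, since $e \circ e = g' \circ (f \circ g') \circ f = g' \circ f = e$. Locality of $\End_\C(X)$ forces $e \in \{0, \mathrm{id}_X\}$: if $e = \mathrm{id}_X$ then $g'$ is a two-sided inverse of $f$, contradicting that $f$ is not an isomorphism; if $e = 0$ then $f = (f \circ g') \circ f = f \circ e = 0$, so $f \circ g = 0$ cannot be a unit of the nonzero ring $\End_\C(Y)$. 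Either way we have a contradiction, so $f \circ g$ is a non-unit and hence lies in $\J(\End_\C(Y))$; as $g$ was arbitrary, $f \in \Rad_\C(X,Y)$.

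The only step of real substance is isolating the idempotent $g' \circ f$ in the converse and using indecomposability of the \emph{source} $X$ (not merely the target $Y$) to collapse it; everything else is formal. I would also flag that the argument tacitly uses ``an indecomposable object has local endomorphism ring'', i.e.\ the Krull--Remak--Schmidt--Azumaya hypothesis: without it one can have an indecomposable $X$ with $\End_\C(X) \cong \kk \times \kk$, where the non-unit $(1,0)$ does not lie in $\Rad_\C(X,X) = \J(\End_\C(X)) = 0$, so the statement genuinely requires this assumption. Alternatively, one can run the whole proof through \Cref{prop:Krause}, replacing ``$f \circ g \in \J(\End_\C(Y))$ for all $g$'' by ``$f \in \J(\Hom_\C(X,Y))$'', but the content is unchanged.
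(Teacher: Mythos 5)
Your proof is correct, and it is in fact substantially more complete than the one printed in the paper. The paper's own argument simply asserts the key step: for $f$ a nonisomorphism and $g\colon Y\to X$ nonzero, it states ``then $f\circ g\in\J(\End_{\C}(Y))$'' with no justification. You supply exactly the missing content: locality of $\End_\C(Y)$ (from the Krull--Remak--Schmidt--Azumaya hypothesis) reduces the claim to showing $f\circ g$ is not a unit, and the idempotent $e=g'\circ f$ in the local ring $\End_\C(X)$ must be $0$ or $\mathrm{id}_X$, each branch yielding a contradiction. Your forward direction coincides with the paper's (implicit) reversal of the argument. Two small remarks: your observation that indecomposability of the \emph{source} is what collapses the idempotent is exactly right and is not visible in the paper's sketch; and your caveat about needing local endomorphism rings is well taken, since \Cref{def:radical of C} alone does not force the statement without that standing assumption. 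An equivalent shortcut, which you mention, is to quote \Cref{prop:Krause} and use the standard characterization of the radical between objects with local endomorphism rings, but your direct argument from the definition is self-contained and preferable in context.
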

\begin{proof}
    Suppose $f$ is not an isomorphism.
    If $\C$ does not have cycles we are done.
    If $\C$ has cycles, let $g:Y\to X$ be a nonzero morphism.
    Then $f\circ g\in\J(\End_{\C}(Y))$ and so $f\in \Rad_{\C}(X,Y)$.
    Reversing the argument shows that if $f\in\Rad_{\C}(X,Y)$ then $f$ is not an isomorphism.
\end{proof}

Recall that $\C$ is semi-simple if every object in $\C$ is a finite direct sum of simple objects and all such direct sums exist.

\begin{proposition}\label{prop:C is basic}
    If $\C$ is Krull--Remak--Schmidt, then $\C/ \Rad(\C)$ is semi-simple.
\end{proposition}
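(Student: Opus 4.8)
The plan is to reduce the statement to the classical fact that a ring which equals its own Jacobson radical quotient behaves semisimply, applied hom-space by hom-space. First I would fix indecomposable objects $X,Y$ in $\C$ and analyze $\Hom_{\C/\Rad(\C)}(X,Y)$. By definition of the quotient ideal, this space is $\Hom_{\C}(X,Y)/\Rad_{\C}(X,Y)$, which by \Cref{prop:Krause} equals $\Hom_{\C}(X,Y)/\J(\Hom_{\C}(X,Y))$. The key case is $X \cong Y$: then $\End_{\C}(X)$ is a local ring (by the Krull--Remak--Schmidt hypothesis), so $\End_{\C}(X)/\J(\End_{\C}(X))$ is a division ring, and since we work over $\kk = \overline{\kk}$ and the endomorphism rings in question are finite-dimensional (or at least the relevant quotient is), this quotient is $\kk$ itself. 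Hence in $\C/\Rad(\C)$ every indecomposable object has endomorphism ring $\kk$, a field.

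Next I would show that distinct indecomposables become Hom-orthogonal in the quotient. If $X \not\cong Y$ are indecomposable, I claim $\Hom_{\C}(X,Y) = \Rad_{\C}(X,Y)$, so that $\Hom_{\C/\Rad(\C)}(X,Y) = 0$. This follows from \Cref{prop:easy radical}: any morphism $X \to Y$ between non-isomorphic indecomposables lies in $\Rad(\C)$ (a non-isomorphism between indecomposables is always radical). Combined with the previous paragraph, in $\C/\Rad(\C)$ we have $\Hom(X,Y) = \delta_{X \cong Y}\,\kk$ on indecomposables, and since every object of $\C$ (hence of $\C/\Rad(\C)$) is a finite direct sum of indecomposables, this makes the indecomposables of $\C/\Rad(\C)$ into pairwise non-isomorphic simple objects: an object with endomorphism ring a field and no proper nonzero subobjects (the latter because any such subobject would be a summand by semisimplicity of the hom-spaces, forcing it to be $0$ or the whole object). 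Thus every object of $\C/\Rad(\C)$ is a finite direct sum of simple objects, i.e. $\C/\Rad(\C)$ is semi-simple.

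I expect the main obstacle to be the bookkeeping around \emph{which} ring-theoretic facts are genuinely available: specifically, pinning down that $\End_{\C}(X)/\J(\End_{\C}(X)) \cong \kk$ rather than merely a division algebra over $\kk$. In the finite-dimensional setting this is immediate from $\kk = \overline{\kk}$, but here we only assume $\C$ is $\kk$-linear and Krull--Remak--Schmidt, so I would either invoke that local rings arising in the KRS setting have residue division ring finite-dimensional over $\kk$ (hence equal to $\kk$), or state the conclusion at the level of "division ring" and note that semi-simplicity in the sense of the paper's definition only needs the indecomposables to be simple objects — which requires no identification of the residue ring at all. The second route is cleaner: simplicity of an object is detected by the lattice of subobjects, and the decomposition of hom-spaces above shows directly that indecomposables have no proper nonzero subobjects. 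A secondary, more routine point is justifying that a subobject of an indecomposable in $\C/\Rad(\C)$ is a direct summand; this uses that $\End_{\C/\Rad(\C)}(X)$ is a division ring together with the standard idempotent argument, and is where \Cref{prop:Krause} does the real work.
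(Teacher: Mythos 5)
Your proposal is correct and follows essentially the same route as the paper: the heart of both arguments is that any morphism between non-isomorphic indecomposables is a non-isomorphism, hence lies in $\Rad(\C)$ by \Cref{prop:easy radical}, so $\Hom_{\C/\Rad(\C)}(X,Y)=0$, and one then extends additively over the finite direct-sum decompositions supplied by the Krull--Remak--Schmidt hypothesis. Your version is in fact more careful than the paper's one-line proof, which does not address why the indecomposables become simple in the quotient; your observation that $\End_{\C}(X)/\J(\End_{\C}(X))$ is a division ring (and that this, not the identification with $\kk$, is what simplicity needs) fills exactly the step the paper leaves implicit.
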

\begin{proof}
    Let $\C$ be a Krull--Remak--Schmidt category.
    Let $X$ and $Y$ be indecomposables in $\C$ such that $X\not\cong Y$.
    Then $\Hom_{\C}(X,Y)=\Rad_{\C}(X,Y)$ and so $\Hom_{\C / \Rad(\C)}(X,Y)=0$.
    Extending bilinearly we see $\C / \Rad(\C)$ is semi-simple.
\end{proof}

\begin{remark}\label{rmk:categorification is basic}
    It follows immediately from \Cref{prop:C is basic} that if $Q$ is a finite acyclic quiver then $\Q /\Rad(\Q)$ is semi-simple.
\end{remark}

\subsection{Admissible Ideals}\label{sec:admissible ideals}

\begin{definition}[\cite{Krause}]
    Let $\{\C_i\}_{i\in I}$ be a family of full additive subcategories of $\C$. We have an \emph{orthogonal decomposition} $\coprod_{i\in I} \C_i$ of $\C$ if every object $X$ in $\C$ is isomorphic to a direct sum $\bigoplus_{i\in I} X_i$, where $X_i$ is an object of $\C_i$, and for $X_i\in \C_i,X_j\in C_j$ we have $\Hom_{\C}(X_i,X_j)=0$ when $i\neq j$.
    
    We say $\C$ is \emph{connected} if the only orthogonal decomposition of $\C$ is the trivial one. 
\end{definition}

An \emph{ideal} $\I$ of a category $C$ is a collection of morphisms is $\C$ such that for any $f\in \I$ and for any $g$ and $h$ such that the composition $gfh $ is defined, the composition $gfh\in \I$.
For an ideal $\I$ of $\C$, we denote by $\I(X,Y)$ the morphisms in $\Hom_{\C}(X,Y)\cap \I$.

\begin{remark}
    For an ideal $\I$ of $\C$, the category $\C /\I$ has the same objects as $\C$.
    The morphisms of $\C / \I$ are given by $\Hom_{\C}(X,Y) / \I(X,Y)$.
    A representation $V: \C / \I \to \kk\text{vec}$ is also a representation of $\C$ by precomposition with the quotient functor $\pi$. Thus we obtain a representation $\widetilde{V}:\C \stackrel{\pi}{\to} \C / \I \stackrel{V}{\to} \kk\text{vec}$.
    Hence 
    we may consider the representations of $\C/\I$ as a subcategory of the representations of $\C$.
    In particular, representations of $\C/\I$ are those representations $V$ of $\C$ such that if $f\in\I$ then $V(f)=0$.
\end{remark}

\begin{definition}\label{def:admissible}
    Let $\C$ be a $\Bbbk$-linear, Krull--Remak--Schmidt--Azumaya category and $\I$ an ideal in $\C$.
    We say $\I$ is \emph{admissible} if the following are satisfied.
    \begin{enumerate}
        \item For each $f$ in $\I$, there exists a finite collection of morphisms $g_1,\ldots,g_n$ not in $\I$ such that $f = g_n\circ\cdots\circ g_1$.
        \item For each indecomposable $X$ in $\C$, the endomorphism ring $\End_{\C}(X) / \I(X,X)$ is finite-dimensional.
    \end{enumerate}
\end{definition}

We remark that, in \Cref{def:admissible}(2), we do not want to require that there is some $n$ that works for all nonisomorphism endomorphisms $f$.
See \Cref{ex: cycles length} for an explicit example why.

\begin{lemma}\label{lem: admissible in radical}
	Let $\C$ be a Krull--Remak--Schmidt category and $\I$ an ideal in $\C$.
    If $\I$ is admissible, then $\I$ is contained in the radical of $\C$.
\end{lemma}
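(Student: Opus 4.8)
The plan is to reduce the inclusion $\I\subseteq\Rad(\C)$ to morphisms between indecomposable objects — where \Cref{prop:easy radical} identifies $\Rad(\C)$ with the non-isomorphisms — and then to rule out an isomorphism lying in $\I$ using condition (1) of \Cref{def:admissible}. Since $\I$ and $\Rad(\C)$ are both ideals and $\C$ is Krull--Remak--Schmidt, it suffices to prove $\I(X,Y)\subseteq\Rad_\C(X,Y)$ for \emph{indecomposable} $X,Y$: writing $X\cong\bigoplus_i X_i$ and $Y\cong\bigoplus_j Y_j$ as \emph{finite} biproducts of indecomposables with canonical inclusions $\iota$ and projections $\pi$, every component $\pi_j\circ f\circ\iota_i$ of a morphism $f\in\I(X,Y)$ lies in $\I(X_i,Y_j)$, and if each such component lies in $\Rad(\C)$ then so does $f=\sum_{i,j}\iota_j\circ(\pi_j\circ f\circ\iota_i)\circ\pi_i$, as $\Rad(\C)$ is an ideal.

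For the indecomposable case, fix $f\in\I(X,Y)$ with $X,Y$ indecomposable. If $X\not\cong Y$ there is no isomorphism $X\to Y$, so $f\in\Rad_\C(X,Y)$ by \Cref{prop:easy radical}. If $X\cong Y$, suppose for contradiction that $f$ is an isomorphism. By \Cref{def:admissible}(1) we may write $f=g_n\circ\cdots\circ g_1$ with $n\geq1$ and each $g_i\notin\I$, where $g_1\colon X\to X_1$. Then
\[
g_1 \;=\; g_1\circ\operatorname{id}_X \;=\; g_1\circ f^{-1}\circ f \;=\;\bigl(g_1\circ f^{-1}\bigr)\circ f\;\in\;\I,
\]
since $f\in\I$ and $\I$ is an ideal; this contradicts $g_1\notin\I$. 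Hence $f$ is not an isomorphism, so $f\in\Rad_\C(X,Y)$ by \Cref{prop:easy radical}. (Equivalently: $\operatorname{id}_X=f^{-1}\circ f\in\I$, so $g_1=g_1\circ\operatorname{id}_X\in\I$; the content is simply that an admissible ideal cannot contain the identity of an indecomposable object.)

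Note that only condition (1) of \Cref{def:admissible} is used; condition (2) — the finiteness half of admissibility — is not needed for this inclusion. The one place that needs a little care is the reduction to the indecomposable case: one must verify that membership in an ideal is detected componentwise with respect to a biproduct decomposition, used both to push the components of $f$ into $\I$ and to rebuild $f$ from components lying in $\Rad(\C)$, and that the decompositions in play are finite — which is exactly where the Krull--Remak--Schmidt hypothesis is needed.
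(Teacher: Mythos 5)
Your proof is correct and follows essentially the same strategy as the paper's: reduce to indecomposables using the Krull--Remak--Schmidt decomposition and the ideal property, then invoke \Cref{prop:easy radical} and derive a contradiction from \Cref{def:admissible}(1) if $f$ were an isomorphism. The only cosmetic difference is that you derive the contradiction by factoring $f$ itself and showing $g_1=(g_1\circ f^{-1})\circ f\in\I$, whereas the paper first notes $1_X\in\I$ and then argues about factorizations of $1_X$; these are the same idea expressed slightly differently, and your phrasing is if anything a little more direct.
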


\begin{proof}
	Let $f:X\rightarrow Y$ be a morphism in $\I$ between indecomposable objects. Then we know by \Cref{prop:easy radical} that if $f$ is not contained in the radical, it is an isomorphism. However, if $f$ is an isomorphism, we have $1_X\in\I$.
	Then \emph{every} morphism to / from $X$ is in $\I$.
	Thus, if $1_x=g_n\circ\cdots\circ g_1$ for any composition, both $g_n,g_1\in \I$, which contradicts condition \Cref{def:admissible}(1).
	Hence $\I(X,Y)\subseteq \Rad(X,Y)$ for indecomposable $X,Y$. 
	
	Now let $X=\bigoplus_{i=0}^m X_i$ and $Y=\bigoplus_{j=0}^n Y_j$, where each $X_i$ and $Y_j$ is indecomposable. Consider $f\in \I(X,Y)$. We can rewrite $f$ as $f=(f_{ij})$, where $f_{ij}:X_i\rightarrow Y_j$. By composition with the canonical injections and projections, we see that $f_{ij}\in \I(X_i,Y_j)$, so by the above, $f_{ij}\in \Rad(X_i,Y_j)$.
	Then by linearity, $f\in \Rad(X,Y)$.
\end{proof}

Let $Q$ be a finite quiver and let $\Q$ be its $\kk$-linear categorification. Suppose $I$ is an ideal of the path algebra $\kk Q$. We show how to build an ideal $\I$ in $\Q$ from $I$. 

From the definition of the $\kk$-linear categorification, we know that each path in $I$ corresponds to a non-zero morphism in $\Q$, see \Cref{lem:correspondence between paths and morphisms}. We (na\"ively) let $\I$ be the set of morphisms obtained by mapping $I$ to $\Q$. We now show that $\I$ is an ideal of the category $\Q$. 

By $\kk$-linearity of $Q$, it is enough to consider morphisms between indecomposable objects.
Suppose $f\in\I(i,j)$ for some $i,j\in Q_0=\operatorname{Ind}\Q$, and let $g:j\rightarrow k$ and $h:l\rightarrow i$ be two nonzero morphisms in $\Q$.
By \Cref{lem:correspondence between paths and morphisms} we know that $f$ corresponds to an element $\rho$ in $e_j\kk Qe_i$.
Further, $g$ corresponds to an element $\psi$ in $\kk Qe_j$ and $h$ corresponds to an element $\phi$ in $e_i\kk Q$.
Each of $\rho$, $\phi$, and $\psi$ are are sums of paths in $Q$ from the respective source and to the respective target.
Without loss of generality, due to $\kk$-linearity, suppose each of $\rho$, $\phi$, and $\psi$ is a path in $Q$.
We see $\psi\rho\phi$ is an element of $I$ since $I$ is a two-sided ideal containing $\rho$.
The image of the composition $\psi\rho\phi$ is the composition $gfh$, which must therefore be in $\I$.

\begin{proposition}\label{prop:admissible is correct}
    Let $Q$ be a finite quiver and $I$ an ideal of $\kk Q$ as a path algebra.
    Let $\Q$ be the $\kk$-linear category induced by $Q$ and let $\I$ be the ideal induced by $I$ in $\Q$.
    Then $\I$ is an admissible ideal of $\Q$ as in \Cref{def:admissible} if and only if $I$ is an admissible ideal of $\kk Q$.
\end{proposition}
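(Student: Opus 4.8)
The plan is to unwind the two conditions in \Cref{def:admissible} into the two conditions of the classical notion of an admissible ideal of $\kk Q$, namely $\mathfrak a^{m}\subseteq I\subseteq\mathfrak a^{2}$ for some $m$, where $\mathfrak a\subseteq\kk Q$ is the arrow ideal. The bridge is the dictionary of \Cref{lem:correspondence between paths and morphisms} together with the additive structure of $\Q$: a morphism of $\Q$ between indecomposables $i$ and $j$ is an element of $e_{j}\kk Q e_{i}$; a general morphism of $\Q$ is a matrix of such elements; and a morphism of $\Q$ lies in $\I$ exactly when all of its entries lie in $I$. In particular $\I(i,j)=e_{j}Ie_{i}$, and between indecomposables ``$g\notin\I$'' means that the corresponding element of $\kk Q$ does not lie in $I$.

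The easier half is \Cref{def:admissible}(2). Since $\End_{\Q}(i)=e_{i}\kk Qe_{i}$ and $\I(i,i)=e_{i}Ie_{i}$, we get $\End_{\Q}(i)/\I(i,i)\cong e_{i}(\kk Q/I)e_{i}$, so (2) asserts finite dimensionality of every $e_{i}(\kk Q/I)e_{i}$. This is equivalent to $\kk Q/I$ being finite dimensional: one direction is trivial, and the other uses that $Q$ is finite, so that every path of length at least $|Q_{0}|$ repeats a vertex and hence factors through a cycle, so finiteness of the $e_{i}(\kk Q/I)e_{i}$ caps the lengths of the paths surviving in $\kk Q/I$. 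Finally, for a finite quiver, $\kk Q/I$ is finite dimensional if and only if $\mathfrak a^{m}\subseteq I$ for some $m$. (The case that really matters is when $\Q$ is Krull--Remak--Schmidt--Azumaya: then each $e_{i}\kk Qe_{i}$ is local, which forces $Q$ acyclic, and in that case $\kk Q$ is already finite dimensional and $\mathfrak a$ is nilpotent, so both (2) and the inclusion $\mathfrak a^{m}\subseteq I$ hold automatically.)

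The substantive half is \Cref{def:admissible}(1), which I would match with $I\subseteq\mathfrak a^{2}$. If $I\subseteq\mathfrak a^{2}$, then a nonzero $r\in I$ between $i$ and $j$ is a $\kk$-linear combination of paths of length at least $2$, each of which is an honest composition of its constituent arrows; padding the shorter summands with identity morphisms up to the common maximal length $\ell\geq 2$ and assembling the pieces through direct sums of the intermediate vertices expresses $r$ as a composition $r=g_{\ell}\circ\cdots\circ g_{1}$ whose factors are built from arrows and scalar multiples of identities. As $I\subseteq\mathfrak a^{2}$ is a proper ideal containing no vertices and no arrows, none of these entries, and hence none of the $g_{t}$, lies in $\I$, so (1) holds. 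Conversely, if $I\not\subseteq\mathfrak a^{2}$, multiply a witnessing $r\in I$ by vertex idempotents to obtain either a vertex $e_{v}\in I$ -- whence $1_{v}\in\I$, and then no composition of morphisms avoiding $\I$ can equal $1_{v}$, since $1_{v}\in\I$ drags every morphism incident to $v$ into $\I$ -- or else a nonzero $s\in I$ between two vertices, of positive path-degree and with a nonzero degree-$1$ component, which cannot be written as a composition of two or more morphisms outside $\I$.

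This last point is where the real care is needed, and it is the main obstacle: to rule out $s$ one tracks the lowest path-degree occurring in a morphism, which is superadditive under composition, so a composition of two or more factors not in $\I$ cannot produce the degree-$1$ term of $s$ -- provided ``composition of morphisms not in $\I$'' is understood so that this estimate survives biproduct/identity padding (for instance, by taking the factors between indecomposables, i.e.\ as genuine products in $\kk Q$). Once the book-keeping with the idempotents $e_{i}$ and the reduction to single paths by $\kk$-linearity are carried out, and the degenerate cases (the zero morphism, $I$ not proper) are noted, combining the two equivalences proves the proposition.
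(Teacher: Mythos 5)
Your proposal follows the same skeleton as the paper's proof: both sides match \Cref{def:admissible}(1) with the classical condition $I\subseteq\mathfrak{a}^2$ (where $\mathfrak{a}$ is the arrow ideal) and \Cref{def:admissible}(2) with $\mathfrak{a}^m\subseteq I$, and both establish the forward direction of (1) by decomposing a path into its constituent arrows, none of which can lie in $I$. You diverge in two places. For (2), the paper argues cycle-by-cycle, choosing for each cycle $\rho$ at $i$ a minimal $m_\rho$ with $\rho^{m_\rho}\in\I(i,i)$ and taking a maximum over the finitely many cycles, whereas you route through finite-dimensionality of $\kk Q/I$; these are equivalent in substance, but note that your standalone equivalence ``all $e_i(\kk Q/I)e_i$ finite-dimensional iff $\mathfrak{a}^m\subseteq I$'' fails without first knowing $I\subseteq\mathfrak{a}$ (take a loop $x$ and $I=(x^2-x)$), so the (2)-half must consume the output of the (1)-half rather than being combined with it as an independent equivalence --- the paper's claim that $m_\rho$ exists has exactly the same hidden dependency. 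For the converse of (1), the paper writes only ``reversing the argument proves the converse,'' while you supply the actual content: extracting a vertex or a degree-one component from a witness of $I\not\subseteq\mathfrak{a}^2$ and using superadditivity of the lowest path-degree under composition. The caveat you flag there is genuine and is not addressed by the paper: if the factors $g_i$ in \Cref{def:admissible}(1) are allowed to run through decomposable objects, then for instance $\alpha=\left[\begin{smallmatrix}0 & 1_j\end{smallmatrix}\right]\circ\left[\begin{smallmatrix}1_i \\ \alpha\end{smallmatrix}\right]$ exhibits a single arrow in $\I$ as a composite of two morphisms not in $\I$, so the degree estimate (and indeed the converse implication itself) survives only under the reading you propose, with factors taken between indecomposables. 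Since you identify and handle the one step the paper elides, your write-up is, modulo that interpretive choice, at least as complete as the original.
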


\begin{proof}
    Let $I$ be an admissible ideal of $\kk Q$ and $\rho\in I$.
    We first prove $\mathcal I$ satisfies property (1) of \Cref{def:admissible}.
    Without loss of generality, assume $\rho$ is a path in $Q$.
    Then $\rho = \alpha_n \alpha_{n-1}\cdots \alpha_2\alpha_1$, where each $\alpha_i$ is an arrow in $Q$.
    Now, let $f$ be the morphism in $\Q$ corresponding to $\rho$ and $g_i$ the morphism in $\Q$ corresponding to $\alpha_i$, for each $i$.
    Then we know each $g_i\notin \mathcal I$ and have satisfied property (1) of \Cref{def:admissible}.
    Reversing the argument proves the converse.
    
    If $Q$ has no cycles then the proposition immediately holds for property (2) of \Cref{def:admissible}.
    So, suppose $Q$ has at least one oriented cycle.
    Since $I \subset \Rad^n(\kk Q)$, for some $n\geq 2$, we see that $\mathcal I$ must immediately satisfy property (2) of \Cref{def:admissible}.
    
    Now suppose $\I$ satisfies \Cref{def:admissible}(2).
    Since $Q$ is finite, there are finitely many cycles.
    For each cycle $\rho$ at each vertex $i$, let 
    \[ m_\rho= \min_m \{ m \mid \rho^m \in \I(i,i)\}.\]
    We know such an $m_\rho$ exists since $\End_{\Q}(i) / \I(i,i)$ is finite-dimensional.
    Let $n_\rho$ be $m_\rho$ time the length of $\rho$.
    Then let \[ N = \max\left(\max_\rho \{n_\rho\}\cup \{\text{length of longest path without cycles in }Q\}\right).\]
    Thus, $\Rad^N(\Bbbk Q)\supset I$.
    This concludes the proof.
\end{proof}

\begin{remark}
The second half of the proof above can be extended to more general quivers.
Suppose $\Q$ is the $\kk$-linear categorification of a (not necessarily finite) quiver $Q$ with finitely many cycles. Suppose that for each cycle $\rho$ in the quiver with corresponding morphism $f_\rho:X\rightarrow X$, there is some $n\geq 2$ such that $f_\rho\in \I(X,X)$.
Then $\I$ satisfies criterion (2) in \Cref{def:admissible}. 

For the majority of our examples, this will be the criterion we actually use.
\end{remark}

\begin{example}\label{ex:discrete ideal}
    Consider the quiver from \Cref{ex:discrete categorification}. Let $I$ be the commutative ideal generated by $\{\alpha_2\alpha_1-\beta_2\beta_1\}$.
    
    In the $\kk$-linear categorification, the relation $\alpha_2\alpha_1-\beta_2\beta_1$ can be written as $\left [ \begin{smallmatrix}\alpha_2 & \beta_2\end{smallmatrix}\right]\left [ \begin{smallmatrix}\alpha_1 \\ -\beta_1\end{smallmatrix}\right]$. The ideal generated by this morphism fulfills the criteria for being an admissible ideal. 
\end{example}

\subsection{General Results}\label{sec:general results}

\begin{proposition}\label{prop:connected}
    Let $\C$ be connected, and let $\I$ be an admissible ideal of morphisms. Then $\C / \I$ is connected.
\end{proposition}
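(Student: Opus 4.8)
The plan is to prove the contrapositive: if $\C/\I$ admits a nontrivial orthogonal decomposition then so does $\C$. So suppose $\C/\I=\coprod_{i}\mathcal D_i$ is an orthogonal decomposition, and let $\C_i$ denote the full subcategory of $\C$ on the same objects as $\mathcal D_i$ (recall $\C$ and $\C/\I$ have the same objects). I would show that $\coprod_i\C_i$ is an orthogonal decomposition of $\C$; since $\C$ is connected it is then trivial, hence so is the decomposition of $\C/\I$, which is what we want.

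For the ``object'' condition, note first that $\I\subseteq\Rad(\C)$ by \Cref{lem: admissible in radical}, so $\I(X,X)\subseteq\Rad_\C(X,X)=\J(\End_\C X)$ for every object $X$ by \Cref{prop:Krause}. Given $X$, fix an isomorphism $X\cong\bigoplus_i X_i$ in $\C/\I$ with $X_i$ an object of $\mathcal D_i$ (hence of $\C_i$), and lift it and its inverse to morphisms $u\colon X\to\bigoplus_i X_i$ and $v\colon\bigoplus_i X_i\to X$ in $\C$. Then $vu$ and $uv$ are congruent to the respective identities modulo $\I$, hence modulo the Jacobson radical, and are therefore units; so $u$ and $v$ are mutually inverse isomorphisms in $\C$, i.e.\ $X\cong\bigoplus_i X_i$ already in $\C$.

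The substantive point is the ``Hom'' condition: $\Hom_\C(X,Y)=0$ when $X$ is an object of $\C_i$ and $Y$ of $\C_j$ with $i\neq j$. By additivity it suffices to treat indecomposable $A\in\C_i$ and $B\in\C_j$. Suppose for contradiction $0\neq f\colon A\to B$. Orthogonality of the decomposition of $\C/\I$ gives $\Hom_{\C/\I}(A,B)=0$, so $f\in\I$, and \Cref{def:admissible}(1) lets us write $f=g_n\circ\cdots\circ g_1$ with every $g_k\colon W_{k-1}\to W_k$ \emph{not} in $\I$, $W_0=A$, $W_n=B$. Decomposing each $W_k$ into indecomposables and expanding the composite into matrix entries, the fact that $f\neq 0$ forces a path $A=u_0\to u_1\to\cdots\to u_n=B$ through indecomposable summands along which every entry $g_k^{u_k,u_{k-1}}$ is nonzero. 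Since $A$ and $B$ lie in different parts, the sequence of parts of the $u_k$ must change somewhere, yielding a nonzero morphism between two indecomposables in different parts; such a morphism is again zero in $\C/\I$, hence again in $\I$, and the construction can be applied to it once more. The role of \Cref{def:admissible}(1) is precisely that a morphism crossing between two parts cannot forever be absorbed into $\I$ in this way: iterating, one must ultimately produce a non-$\I$ morphism between indecomposables in different parts, which contradicts orthogonality in $\C/\I$. I expect this last step to be the main obstacle — it is essentially the lemma that for indecomposable $A,B$ one has $\Hom_\C(A,B)\neq 0$ only if $A$ and $B$ lie in the same connected component of $\C/\I$ — and it is where the inductive organization (on the number of factors, together with a careful choice of which indecomposable summands to follow) has to be set up correctly so that the recursion actually terminates. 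Granting the Hom-vanishing, $\coprod_i\C_i$ is an orthogonal decomposition of the connected category $\C$, hence trivial, and therefore $\C/\I$ is connected.
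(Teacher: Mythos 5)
Your overall strategy coincides with the paper's: lift the (assumed nontrivial) orthogonal decomposition of $\C/\I$ to full subcategories of $\C$ on the same objects and contradict the connectedness of $\C$ using \Cref{def:admissible}(1). Your verification of the \emph{object} condition --- lifting the isomorphism $X\cong\bigoplus_i X_i$ from $\C/\I$ to $\C$ via $\I\subseteq\Rad(\C)$ (\Cref{lem: admissible in radical}) and $\Rad_\C(X,X)=\J(\End_\C(X))$ (\Cref{prop:Krause}), so that $vu$ and $uv$ are units --- is correct and is a step the paper's proof passes over in silence; that part is a genuine improvement.

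The Hom-vanishing step, however, contains a real gap, and you flag it yourself. From a nonzero $f\colon A\to B$ in $\I$ between indecomposables in different parts and a factorization $f=g_n\circ\cdots\circ g_1$ with $g_k\notin\I$, you extract a chain of nonzero block entries and find one entry $h$ that crosses between parts. But $h$ is again a nonzero morphism in $\I$ between indecomposables in different parts --- exactly the object you started from --- and ``applying the construction once more'' starts a recursion with no decreasing quantity: the factorization of $h$ supplied by \Cref{def:admissible}(1) may be arbitrarily long, and its crossing entries bear no controlled relation to the data you already have. The statement you defer to (``$\Hom_\C(A,B)\neq 0$ for indecomposables only if $A$ and $B$ lie in the same part'') is not an auxiliary lemma; restricted to indecomposables it \emph{is} the proposition, so leaving it as ``the main obstacle'' leaves the proof unfinished. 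The paper's proof takes a different route through this step that avoids the recursion entirely: it passes once along the factorization, but inside $\C/\I$, where each image $\overline{g_k}$ is nonzero precisely because $g_k\notin\I$, and where orthogonality of the decomposition of $\C/\I$ forces all nonzero components of $\overline{g_1}$ (and then of each subsequent restriction) to remain in the part containing the source, until this collides with the target sitting in a different part. If you want to complete your argument, replace the block analysis of the $g_k$ in $\C$ by an analysis of the $\overline{g_k}$ in $\C/\I$; it is only in the quotient that orthogonality hands you the vanishing you need at each stage.
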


\begin{proof}
 Assume towards a contradiction that $\C / \I$ is not connected; then there exists a decomposition of $\C / \I$ into mutually orthogonal subcategories $\C'_1, \cdots, \C'_n$. We can lift these subcategories to subcategories $\C_1, \cdots, \C_n$ of $\C$. As $\C$ is connected, these subcategories cannot all be mutually orthogonal, so assume that there exists some morphism $f:X_i\rightarrow X_j$, with $X_i\in \C_i, X_j\in \C_j$ and $i\neq j$. To preserve mutual orthogonality in $\C/\I$, we must have $f\in \I$. Then since $I$ is an admissible ideal, we can write $f=g_m\circ \cdots \circ g_1$, where each of the $g_1, \cdots, g_m$ are not in $\I$.
 
 Consider $g_1: X_i\rightarrow Y$ and denote its image in $\C / \I$ by $\overline{g_1}$. As $\C / \I$ is not connected, we can write $Y=\bigoplus_{i=1}^n Y_i$, with $Y_i\in \C'_i$ and $\overline{g_1}=(g^1_1,\cdots g_1^n)$. Now, as $\overline{g_1}$ is nonzero, $g_1^k$ is nonzero for some $k$. If $k\neq i$, we have reached a contradiction. If $k=i$, we can repeat the argument with $\overline{g_2}|_{Y_i}$, eventually reaching a contradiction. 
\end{proof}

\begin{proposition}\label{prop:basic}
    Let $\C$ be Krull--Remak--Schmidt--Azumaya and $\I$ an admissible ideal.
    Then $\C / \mathcal I$ is Krull--Remak--Schmidt--Azumaya.
\end{proposition}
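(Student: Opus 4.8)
The plan is to show that the quotient $\C/\I$ satisfies the two defining properties of a Krull--Remak--Schmidt--Azumaya category: every object decomposes as a (possibly infinite) direct sum of objects with local endomorphism rings, and such a decomposition is unique up to isomorphism. Since $\C/\I$ has the same objects as $\C$, and $\C$ is already KRSA, the existence of a decomposition into indecomposables is inherited; the content is in controlling the endomorphism rings after passing to the quotient and then invoking the Azumaya--Krull--Schmidt machinery.

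First I would record that for any object $Z$ in $\C/\I$, a decomposition $Z \cong \bigoplus_i Z_i$ witnessing KRSA in $\C$ remains a direct sum decomposition in $\C/\I$, since additive functors (here the quotient functor $\pi$) preserve finite biproducts, and infinite direct sums in $\C$ descend because the defining universal property is expressed via hom-sets, each of which is only quotiented. So it suffices to understand $\End_{\C/\I}(X) = \End_{\C}(X)/\I(X,X)$ for $X$ indecomposable in $\C$. The key step is: this ring is local, or zero. By \Cref{lem: admissible in radical}, $\I$ is contained in $\Rad(\C)$, hence $\I(X,X) \subseteq \J(\End_\C X)$. A quotient of a local ring by an ideal contained in its Jacobson radical is again local (the maximal ideal is the image of $\J(\End_\C X)$, and it is still the set of non-units). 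One must also dispatch the degenerate possibility that $\I(X,X) = \End_\C X$; but if $1_X \in \I$ then, as in the proof of \Cref{lem: admissible in radical}, property \Cref{def:admissible}(1) is violated, so this does not occur. Hence every indecomposable summand $X$ of $Z$ still has local (in particular nonzero) endomorphism ring in $\C/\I$, and $X$ remains indecomposable there.

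Next I would establish uniqueness of the decomposition. Here the cleanest route is to invoke Azumaya's theorem in the form: an additive category in which every object is a direct sum of objects with local endomorphism rings automatically has the exchange/uniqueness property, so the decomposition is unique up to isomorphism and reordering. Having just shown existence of such a decomposition in $\C/\I$, uniqueness follows formally. Alternatively, one can transport uniqueness directly from $\C$: if $\bigoplus_i X_i \cong \bigoplus_j Y_j$ in $\C/\I$ with all summands indecomposable, lift to $\C$ using that $X \cong Y$ in $\C/\I$ with $X,Y$ indecomposable in $\C$ forces $X \cong Y$ in $\C$ — because an isomorphism $\bar f \colon X \to Y$ in the quotient lifts to some $f \in \Hom_\C(X,Y)$ with $f$ not in $\Rad(\C)$ (its class is a unit modulo $\I \subseteq \Rad$), hence $f$ is an isomorphism by the standard fact that a morphism between indecomposables failing to lie in the radical is invertible; cf.\ \Cref{prop:easy radical}.

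I expect the main obstacle to be the bookkeeping around \emph{infinite} direct sums: one must be careful that the Azumaya-style uniqueness statement being quoted genuinely applies to arbitrary (not just finite) coproducts, and that the quotient functor does not destroy the relevant coproduct structure — in particular that an object which is an infinite direct sum in $\C$ still satisfies the infinite-coproduct universal property in $\C/\I$, which uses only that $\Hom_{\C/\I}(-,-)$ is a quotient of $\Hom_\C(-,-)$ and that kernels/products of abelian groups behave well under such quotients. Once that is pinned down, the ring-theoretic core (quotient of local by sub-radical ideal is local, plus the non-triviality check) is short, and the rest is formal.
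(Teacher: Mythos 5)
Your proof is correct and follows essentially the same route as the paper: decompose the object in $\C$ using the KRSA hypothesis, then show each indecomposable summand retains a local endomorphism ring in $\C/\I$, the key observation in both arguments being that $1_X \notin \I$ by condition (1) of \Cref{def:admissible}. You take a minor detour through \Cref{lem: admissible in radical} to place $\I(X,X)$ inside the Jacobson radical, whereas the paper uses the slightly more direct observation that any nonzero quotient of a local ring is local (every proper ideal of a local ring already lies in its radical); you are also more explicit than the paper about invoking Azumaya's theorem for uniqueness and about the descent of infinite direct sums to $\C/\I$, both of which the paper leaves implicit.
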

\begin{proof}
Let $X$ be an object in $\C / \mathcal I$. Then $X$ is an object in $\C$, which we assume to be Krull--Remak--Schmidt--Azumaya. It follows that $X=\bigoplus_{\alpha} X_\alpha$, where $\End_\C(X_\alpha)$ is local. If we can show that $\End_{\C/\I}(X_\alpha)$ is local, we are done.

We know that $\End_{\C/\I}(X_{\alpha})=\End_\C(X_{\alpha})/\I(X_{\alpha},X_{\alpha})$. By property 1 of \Cref{def:admissible}, the identity on $X_{\alpha}$ cannot be an element of $\I(X_{\alpha},X_{\alpha})$, so $\End_{\C/\I}(X_{\alpha})$ is a nonzero quotient ring of a local ring, which is local by the ideal correspondence theorem for quotient rings.
\end{proof}

\begin{proposition}\label{prop:radical commutes with admissible}
    Let $\C$ be a category such that all endomorphism rings are artinian and $\I$ an admissible ideal.
    Then \[\Rad(\C / \I)=\Rad(\C) / \I.\] 
\end{proposition}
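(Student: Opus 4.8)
The plan is to prove the two inclusions $\Rad(\C/\I) \supseteq \Rad(\C)/\I$ and $\Rad(\C/\I) \subseteq \Rad(\C)/\I$ separately, reducing in each case to morphisms between indecomposable objects by bilinearity (writing a general morphism as a matrix of components between indecomposable summands, as in the proof of \Cref{lem: admissible in radical}). For indecomposable $X$ and $Y$, \Cref{prop:easy radical} tells us that $\Rad_\C(X,Y)$ consists exactly of the non-isomorphisms, and likewise in $\C/\I$: note that $\C/\I$ is again Krull--Remak--Schmidt--Azumaya with local endomorphism rings by \Cref{prop:basic} (the artinian hypothesis passes to the quotient since a quotient of an artinian ring is artinian, and \Cref{lem: admissible in radical} ensures the quotient is nonzero on each indecomposable), so \Cref{prop:easy radical} applies on both sides once we check $\C/\I$ has indecomposables with the expected structure.

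For the inclusion $\Rad(\C)/\I \subseteq \Rad(\C/\I)$: take $f \in \Rad_\C(X,Y)$ with $X,Y$ indecomposable; then $f$ is not an isomorphism in $\C$, and I claim its image $\bar f$ is not an isomorphism in $\C/\I$. If $\bar f$ were invertible, there would be $\bar g: Y \to X$ with $\bar g \bar f = \overline{1_X}$ and $\bar f \bar g = \overline{1_Y}$, i.e. $1_X - gf \in \I(X,X) \subseteq \Rad_\C(X,X) = \J(\End_\C X)$ by \Cref{lem: admissible in radical} and \Cref{prop:Krause}; but then $gf$ is a unit in the local ring $\End_\C X$, forcing $f$ to be a split monomorphism, and by a symmetric argument with $\End_\C Y$ it is a split epimorphism, hence an isomorphism --- contradiction. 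So $\bar f$ is a non-isomorphism between indecomposables, hence in $\Rad(\C/\I)$ by \Cref{prop:easy radical}.

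For the reverse inclusion $\Rad(\C/\I) \subseteq \Rad(\C)/\I$: take $\bar f \in \Rad_{\C/\I}(X,Y)$ with $X,Y$ indecomposable, so $\bar f$ is a non-isomorphism in $\C/\I$. I want to conclude $f$ is a non-isomorphism in $\C$, so that $f \in \Rad_\C(X,Y)$ and hence $\bar f \in \Rad(\C)/\I$. This is the contrapositive of the key claim already used above: if $f$ were an isomorphism in $\C$ then $\bar f$ would certainly be an isomorphism in $\C/\I$ (the quotient functor is full and sends $f^{-1}$ to an inverse of $\bar f$). Then extend from indecomposables to arbitrary objects: writing $X = \bigoplus X_i$, $Y = \bigoplus Y_j$, a morphism lies in the radical iff each matrix entry does (standard fact about the radical of an additive KRSA category, valid since $\Rad_\C(X_i, Y_j)$ is characterized componentwise), and the quotient by $\I$ respects this decomposition, so the componentwise identification $\Rad_{\C/\I}(X_i,Y_j) = \Rad_\C(X_i,Y_j)/\I(X_i,Y_j)$ assembles to the full statement.

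The main obstacle I anticipate is the bookkeeping around infinite direct sums: $\C$ is only assumed Krull--Remak--Schmidt--Azumaya, not Krull--Remak--Schmidt, so objects may be infinite sums of indecomposables and one must be careful that the componentwise characterization of the radical still holds and that $\I$ behaves well with respect to the (possibly infinite) biproduct decompositions --- in particular that $\I(\bigoplus X_i, \bigoplus Y_j)$ is determined by its components $\I(X_i,Y_j)$. The essential ring-theoretic content, however, is entirely local: it is the observation that a morphism between indecomposables is an isomorphism in $\C$ if and only if its image is an isomorphism in $\C/\I$, which follows cleanly from $\I \subseteq \Rad(\C)$ (\Cref{lem: admissible in radical}) together with locality of the endomorphism rings.
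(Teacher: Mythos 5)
Your proof is correct, but it takes a genuinely different route from the paper's. The paper never reduces to indecomposables: it works with \Cref{def:radical of C} directly for arbitrary objects $X,Y$, and the entire content is the ring-level identity $\J(\End_{\C}(Y))/\I(Y,Y)=\J(\End_{\C/\I}(Y))$, which holds because $\I(Y,Y)\subseteq\J(\End_{\C}(Y))$ by \Cref{lem: admissible in radical}; a short chain of equivalences unwinding the condition ``$f\circ g\in\J(\End_{\C}(Y))$ for all $g\in\Hom_{\C}(Y,X)$'' on both sides of the quotient then finishes the argument uniformly in $X$ and $Y$. You instead pass to indecomposables, characterize the radical on both sides via \Cref{prop:easy radical}, and isolate the genuinely useful fact that the quotient functor $\C\to\C/\I$ reflects isomorphisms between indecomposables (because $\I\subseteq\Rad(\C)$ and the endomorphism rings are local); that local argument is clean and correct, and arguably more transparent about where admissibility enters. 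The one point you leave open --- componentwise reassembly over possibly infinite direct sums --- is in fact a non-issue under the stated hypothesis: if $\End_{\C}(X)$ is artinian it is semiperfect, so $1_X$ decomposes into finitely many primitive orthogonal idempotents and $X$ is a \emph{finite} direct sum of indecomposables with local endomorphism rings. Hence only the finite-biproduct case of the componentwise description of $\Rad$ and of $\I$ is needed, which is exactly what the paper already uses in the proof of \Cref{lem: admissible in radical}; with that observation your argument is complete. The trade-off is that the paper's proof is shorter and avoids decomposition entirely, while yours makes explicit the isomorphism-reflection property of admissible quotients, which is of independent interest.
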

\begin{proof}
The equation $\Rad(\C / \I) = \Rad(\C) / \I$ holds if and only if $\Rad_{\C / \I}(X,Y) = \Rad_\C(X,Y) / \I$ holds for all pairs of objects $X,Y\in \C$.

First note that if $\End_{\C }(Y)$ is artinian, then
\begin{align*}
	 \Rad_\C(Y,Y) / \I &= \J(\End_{\C }(Y))/ \I=  \J(\End_{\C }(Y)+\I)/\I \\ 
	 &= \J(\End_{\C / \I}(Y)) =\Rad_{\C / \I}(Y,Y).
\end{align*}
Now we can see that 
\begin{align*}
	f \in \Rad_\C(X,Y) / \I &\Leftrightarrow f = f'+\I(X,Y)\text{, with } f'\in \Rad_\C(X,Y) \\
	& \Leftrightarrow  f = f'+\I(X,Y)\text{, s.t.\ } f'\circ g'\in \J(\End_{\C }(Y))\, \forall\, g'\in \Hom_\C  (Y,X)\\
	& \Leftrightarrow f\circ (g'+\I(Y,X))\in \J(\End_{\C }(Y)  )/\I\, \forall \, g'\in \Hom_\C  (Y,X)\\
	& \Leftrightarrow f\circ g \in \J(\End_{\C /\I}(Y)  )\, \forall\, g\in \Hom_{\C/\I}  (Y,X)\\
	& \Leftrightarrow f \in \Rad_{\C/\I}(X,Y). 
\end{align*}
\end{proof}
In particular, if $\C$ is $\kk$-linear and Krull--Remak--Schmidt, then all endomorphism rings are artinian and the above Proposition holds.

The following lemma is useful for our examples in \Cref{sec:examples}.
\begin{lemma}\label{lem:stack admissible}
    Let $\C$ be a small, $\Bbbk$-linear, Krull--Remak--Schmidt--Azumaya category and $\I$ an admissible ideal in $\C$.
    Let $\Jay$ be an admissible ideal in $\C/\I$ and $\tJay$ the set of morphisms $f$ in $\C$ such that $f+\I\in\Jay$ in $\C/\I$. Then the following hold:
    \begin{enumerate}
        \item $\tJay$ contains $\I$.
        \item $\tJay$ is an ideal.
        \item $\tJay$ is admissible in $\C$.
        \item $(\C/\I)/\Jay \simeq \C/\tJay$.
    \end{enumerate} 
\end{lemma}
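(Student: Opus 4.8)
The plan is to verify the four statements in order, since each one is used (or at least convenient) for the next. For (1), I would note that $\I$ is precisely the preimage under $\pi\colon\C\to\C/\I$ of the zero ideal, and since $\Jay$ is an ideal it contains all zero morphisms of $\C/\I$; hence $f\in\I$ implies $f+\I=0\in\Jay$, so $f\in\tJay$. For (2), I would check the ideal condition directly: if $f\in\tJay$ and $g,h$ are morphisms in $\C$ with $g\circ f\circ h$ defined, then in $\C/\I$ we have $(g+\I)\circ(f+\I)\circ(h+\I)=(g\circ f\circ h)+\I$, and this lies in $\Jay$ because $\Jay$ is an ideal in $\C/\I$ and $f+\I\in\Jay$; therefore $g\circ f\circ h\in\tJay$. (Here I am using that the quotient functor $\pi$ is $\kk$-linear and identity-on-objects, so composition in $\C/\I$ is computed by representatives.)

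For (3) I must check the two conditions of \Cref{def:admissible} for $\tJay$ in $\C$. For condition (2), fix an indecomposable $X$; I want $\End_\C(X)/\tJay(X,X)$ finite-dimensional. The key observation is the chain $\End_\C(X)\supseteq \I(X,X)$, inducing $\End_\C(X)/\I(X,X)=\End_{\C/\I}(X)$, and $\tJay(X,X)/\I(X,X)=\Jay(X,X)$ by definition of $\tJay$; so by the third isomorphism theorem $\End_\C(X)/\tJay(X,X)\cong \End_{\C/\I}(X)/\Jay(X,X)$, which is finite-dimensional because $\Jay$ is admissible in $\C/\I$. (I should first confirm $X$ is still indecomposable in $\C/\I$, which follows from \Cref{prop:basic}, or rather from the fact that $\End_{\C/\I}(X)$ is a nonzero quotient of the local ring $\End_\C(X)$, hence local.) For condition (1), take $f\in\tJay$; then $f+\I\in\Jay$, so by admissibility of $\Jay$ in $\C/\I$ we can write $f+\I=(h_n+\I)\circ\cdots\circ(h_1+\I)$ with each $h_i+\I\notin\Jay$, i.e.\ each $h_i\notin\tJay$. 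This gives $f-h_n\circ\cdots\circ h_1\in\I$. Now I use admissibility of $\I$ in $\C$: if this difference is nonzero, it factors as a composite of morphisms not in $\I$; but I want a factorization of $f$ itself through morphisms not in $\tJay$. The clean way is: write $e:=f-h_n\circ\cdots\circ h_1\in\I$, so $f=h_n\circ\cdots\circ h_1+e$, and this is not yet a composite. I expect the main obstacle to be exactly this point — one cannot simply concatenate the two factorizations additively. The fix is to reduce to indecomposables and argue on each component, or better, to observe that it suffices to factor $f$ as a composite where no factor lies in $\tJay$, and that since $e\in\I\subseteq\tJay$ while $f\notin$ (the relevant statement being for $f$ with $f\notin\tJay$ we need nothing, and for $f\in\tJay$)... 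More carefully: if $f\in\tJay$ but $f\notin\I$, then $f+\I\neq 0$, and the $\Jay$-factorization of $f+\I$ lifts to a factorization $f = h_n\circ\cdots\circ h_1 + (\text{element of }\I)$; one then handles the error term by noting $\I$ admissible lets us absorb it — the honest route is to pass to a minimal such expression and use that in a Krull--Remak--Schmidt--Azumaya category the radical is the unique maximal ideal on indecomposables, so that a morphism that is not an isomorphism and whose "$\Jay$-class" factors, itself factors through non-$\tJay$ morphisms after adjusting representatives. I would spell this out by first treating $f\colon X\to Y$ with $X,Y$ indecomposable: lift the $\Jay$-factorization, and if the lift differs from $f$ by an element of $\I$, use \Cref{def:admissible}(1) for $\I$ to factor that difference and splice, checking the spliced factors avoid $\tJay$ because they avoid $\I\subseteq\tJay$ on one side and were chosen outside $\Jay$ on the other. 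Then extend to general $X,Y$ by the matrix argument as in \Cref{lem: admissible in radical}.

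Finally, for (4) I would construct the isomorphism of categories explicitly. Both $(\C/\I)/\Jay$ and $\C/\tJay$ have the same objects as $\C$. Define $\Psi\colon \C/\tJay\to (\C/\I)/\Jay$ on morphisms by $f+\tJay\mapsto (f+\I)+\Jay$. This is well-defined: if $f+\tJay=f'+\tJay$ then $f-f'\in\tJay$, so $(f-f')+\I\in\Jay$, so $(f+\I)+\Jay=(f'+\I)+\Jay$. It is $\kk$-linear and functorial since it is induced by the composite of quotient functors $\C\to\C/\I\to(\C/\I)/\Jay$, which kills exactly $\tJay$ by definition of $\tJay$ (a morphism maps to zero iff $f+\I\in\Jay$ iff $f\in\tJay$). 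Hence $\Psi$ is full (the two quotient functors are each full) and faithful (by the kernel computation just made), and bijective on objects, so it is an isomorphism of categories — in particular the claimed equivalence $\simeq$. The only genuinely delicate step is condition (1) of admissibility in part (3); everything else is a diagram chase with quotient functors and the third isomorphism theorem.
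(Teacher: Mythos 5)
Your treatment of parts (1), (2) and (4) is correct and essentially identical to the paper's: (1) because zero morphisms of $\C/\I$ lie in $\Jay$, (2) by computing compositions on representatives, and (4) via the map $f+\tJay\mapsto (f+\I)+\Jay$, which the paper checks to be a bijection on each Hom-space rather than phrasing it as a composite of quotient functors, but the content is the same. For condition (2) of admissibility in part (3), your route differs from the paper's in a way worth noting: you verify finite-dimensionality of $\End_{\C}(X)/\tJay(X,X)$ via the isomorphism $\End_{\C}(X)/\tJay(X,X)\cong\End_{\C/\I}(X)/\Jay(X,X)$, which is clean and matches \Cref{def:admissible}(2) as actually stated; the paper instead checks that nonisomorphism endomorphisms have a power in $\tJay$ (the variant of the condition appearing in the introduction), deducing it from $f^n\in\I\subseteq\tJay$. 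Your version is, if anything, the more faithful one.

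The one place where your proposal does not close is condition (1) of admissibility in part (3). You correctly isolate the crux: lifting the factorization $f+\I=(h_n+\I)\circ\cdots\circ(h_1+\I)$ to $\C$ only gives $f=h_n\circ\cdots\circ h_1+e$ with $e\in\I$, and a sum of a composite and an error term is not itself a composite. But your text then cycles through several candidate fixes (``absorb'' the error, pass to a minimal expression, splice a factorization of $e$ into the one of $f$) without actually carrying any of them out, so as written this step is not proved. The paper's own proof resolves it by asserting that the representatives can be chosen in the form $g_i+h_i$ with $h_i\in\I$ so that the composite equals $f$ \emph{exactly}, whence each factor $g_i+h_i$ avoids $\tJay$ because $(g_i+h_i)+\I=g_i+\I\notin\Jay$. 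If you want to complete your argument along your own lines, one workable splice is: group a factorization $e=e_2\circ e_1$ (from \Cref{def:admissible}(1) for $\I$) together with the lifted factorization into a two-step composite through a direct sum, $f=\left[\begin{smallmatrix} h_n\cdots h_2 & e_2\end{smallmatrix}\right]\circ\left[\begin{smallmatrix} h_1 \\ e_1\end{smallmatrix}\right]$, and observe that a matrix morphism lies in the ideal $\tJay$ only if all its components do; one still has to check the components carefully (and possibly induct), which is exactly the work your sketch defers.
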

\begin{proof}
    \textbf{1.} Let $f\in\I$.
    Then $f\mapsto 0$ in $\C/\I$.
    Since all zero morphisms in $\C/\I$ are in $\Jay$, we see $f\in \tJay$.
    
    \textbf{2.} Let $f:X\to Y$ be nonzero in $\tJay$ and let $g:Y\to Z$ be nonzero in $\C$.
    Then $f+\I$ and $g+\I$ are in $\Mor(\C/\I)$.
    So, $(g+\I)\circ(f+\I)$ is in $\Jay$ and is equal to $gf + \I$.
    Then $gf \in \tJay$.
    
    \textbf{3.} Let $f\in\tJay$.
    Then $f+\I\in\Jay$ and, by assumption, there exists $(g_1+\I),\ldots,(g_n+\I)$ in $\Mor(\C/\I)\setminus\Jay$ such that \[ f + \I = (g_n+\I)\circ (g_{n-1}+\I)\circ\cdots\circ (g_2+\I)\circ(g_1+\I).\]
    Then for each $g_i$ there is $h_i\in\I$ such that $g_i+h_i\mapsto g_i+\I$ and \[f = (g_n+h_n)\circ(g_{n-1}+h_{n-1})\circ\cdots\circ(g_2+h_2)\circ (g_1+h_1). \]
    Since each $g_i\notin \Jay$, we know each $g_i+h_i\notin\tJay$ and so $f$ is a finite composition of morphisms not in $\tJay$.
    Additionally, for any nonzero, nonisomorphism endomorphism $f$, we have $f^n\in\I$ for some $n\in\N$.
    Then $f^n\in\tJay$ by statement 1.
    Therefore, $\tJay$ is admissible.
    
    \textbf{4.} Recall $\Ob((\C/\I)/\Jay) = \Ob(\C/\tJay)$.
    We now produce a bijection between $\Mor((\C/\I)/\Jay)$ and  $\Mor(\C/\tJay)$ by producing bijections \[ \phi_{X,Y}:\Hom_{\C/\tJay}(X,Y) \to \Hom_{(\C/\I)/\Jay}(X,Y) \] for each ordered pair $X,Y$ of objects.
    
    Let $f+\tJay\in\Hom_{\C/\tJay}(X,Y)$.
    Then there exists $g\in\tJay\subset \Mor(\C)$ such that $f+g\mapsto f+\tJay\in\Mor(\C/\tJay)$.
    If $g\in\I$ then $f+g\mapsto f+\I\in\Mor(\C/\I)$; otherewise $f+g\mapsto f+g+\I\in\Mor(\C/\I)$.
    In either case, $f+g\mapsto f+\Jay$ in $\Hom_{(\C/\I)/\J}(X,Y)$.
    We define $\phi_{X,Y}(f+\tJay):= f+\Jay$.
    
    It is immediate that $\phi_{X,Y}$ is injective.
    Suppose $f+\Jay\in\Hom_{(\C/\I)/\Jay}$.
    Then there exists $g+\I$ in $\Hom_{\C/\I}(X,Y)$ such that $f+g+\I\mapsto f+\Jay$.
    Then there exists $h\in\Hom_{\C}(X,Y)$ such that $f+g+h\mapsto f+g+\I$.
    But this means $g+h\in\tJay$ and so $f+(g+h)\mapsto f+\tJay$ in $\Hom_{\C/\tJay}(X,Y)$.
    Thus, $\phi_{X,Y}$ is surjective.
\end{proof}

\section{Relations}\label{sec:relations}

In this section we look at two types of admissible ideals: those generated by point relations (\Cref{subsec:point relations}) and those generated by length relations (\Cref{subsec:length}).
These generalize a relation generated by a single path of length two and relations generated by all paths of a particular length, respectively.

\subsection{Point Relations}\label{subsec:point relations}
Here we generalize relations generated by a single path of length 2 to a point relation (\Cref{def:point relation}) and prove this generates an admissible ideal (\Cref{thm:point relations are admissible}).
We then give examples that point to a continuous version of a gentle algebra (\Cref{ex:monomial points,ex:crossing real lines}).

\begin{definition}\label{def:decomposition point}
    Let $f:X\to Y$ be a nonisomorphism between indecomposables in $\C$.
    A \emph{decomposition point} $Z$ in $\C$ is an indecomposable object such that there exists nonisomorphisms $g:X\to Z$ and $h:Z\to Y$ where $f=h\circ g$.
\end{definition}

\begin{definition}\label{def:acyclic morphism}
    Let $f:X\to Y$ be a nonisomorphism of indecomposables in $\C$, $Z$ a decomposition point of $f$, and $f=g\circ h$ such a decomposition.
    We call $f$ an \emph{acyclic morphism} if for all pairs $g':X\to Z$ and $h':Z\to Y$ such that $f=h'\circ g'$ then $h'$ and $g'$ are scalar multiples of $h$ and $g$, respectively.
\end{definition}

Note that an acyclic morphism cannot be irreducible. (I.e., it must be a path of length at least 2 in a quiver.)

\begin{definition}\label{def:point relation}
    Let $f$ be an acyclic morphism and $Z$ a decomposition point of $f$.
    Let $P$ be the set of all nonisomorphisms $g$ between indecomposables satisfying the following.
    \begin{itemize}
        \item There exists $h_1$ and $h_2$ morphisms of indecomposables such that $f=h_2\circ g\circ h_1$.
        \item We have $Z$ as a decomposition point of $g$.
    \end{itemize}
    Let $\mathcal P_{f,Z}$ be the ideal in $\C$ generated by $P$.
    We call $\mathcal P_{f,Z}$ the \emph{point relation through $Z$ by $f$}.
\end{definition}

\begin{definition}\label{def:admissible point relations}
    Let $\{\mathcal P_\alpha\}$ be a collection of point relations in $\C$.
    We say $\{\mathcal P_\alpha\}$ is \emph{admissible} if each morphism of indecomposables appears in at most finitely-many $\mathcal P_\alpha$.
\end{definition}

\begin{theorem}\label{thm:point relations are admissible}
    Let $\{\mathcal P_\alpha\}$ be an admissible collection of point relations in $\C$ and let $\I=\langle \bigcup_\alpha P_\alpha\rangle$.
    Suppose also that for each indecomposable $X$ in $\C$, we have $\End_{\C}(X) / \I(X,X)$ is finite dimensional.
    Then $\I$ is an admissible ideal.
\end{theorem}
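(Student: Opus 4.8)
The plan is to verify the two conditions of \Cref{def:admissible} for $\I = \langle \bigcup_\alpha P_\alpha\rangle$. Condition (2) is handed to us as a hypothesis, so the whole content is in establishing condition (1): every $f \in \I$ can be written as a finite composition $g_n \circ \cdots \circ g_1$ of morphisms \emph{not} in $\I$.

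First I would set up the right notion of generation. Since $\C$ is $\kk$-linear and Krull--Remak--Schmidt--Azumaya, it suffices to treat morphisms between indecomposables (the general case follows by the matrix argument as in \Cref{lem: admissible in radical}). A morphism $f$ between indecomposables lies in $\I$ iff it is a $\kk$-linear combination of morphisms each of the form $h_2 \circ p \circ h_1$ with $p \in P_\alpha$ for some $\alpha$. By $\kk$-linearity of condition (1) (if $f = \lambda_1 f_1 + \cdots + \lambda_k f_k$ and each $f_i$ factors through morphisms outside $\I$ — but one must be careful: a sum of such morphisms need not obviously factor through morphisms outside $\I$), I would instead argue directly: it is enough to show each \emph{single} generator $h_2 \circ p \circ h_1$, with $p \in P_\alpha$, admits such a factorization, because then an arbitrary element of $\I(X,Y)$ between indecomposables — being a sum of such generators plus the fact that $\I$ is an ideal in a KRSA category — still needs the linear-combination issue resolved. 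Here is the key point I would lean on: for the admissible-collection hypothesis and the \emph{acyclic} condition in \Cref{def:acyclic morphism}, any $p \in P_{f,Z}$ factors as $p = h \circ g$ through the decomposition point $Z$, and by acyclicity this factorization is essentially unique up to scalars at each ``level''. So I would prove: if $q \in \I$ is a morphism of indecomposables, then $q$ itself factors as $q = b \circ a$ with $a, b$ morphisms of indecomposables, $a \notin \I$, $b \notin \I$ — i.e.\ $q$ factors through a decomposition point, and the two halves are not themselves in $\I$ because being in $\I$ forces one to pass through one of the forbidden points $Z_\alpha$, and $Z$ being a decomposition point of $q$ (inherited from membership in some $P_\alpha$ after possibly composing) can be arranged so that the two halves each meet $Z$ only trivially. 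Then induct: if either half is still in $\I$, it too factors, and the process terminates because (using condition (2) and the finite-dimensionality hypothesis on $\End_\C(X)/\I(X,X)$, together with the acyclicity forcing strictly ``shorter'' factorizations) the lengths cannot increase indefinitely.

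More precisely, the induction should be on some length function. For a morphism in $\I$ between indecomposables, I would want to say it factors through at least one forbidden point $Z_\alpha$; pick such a factorization $q = h \circ g$ through $Z_\alpha$. Now $g$ and $h$ are nonisomorphisms (the forbidden points are decomposition points, so the factoring morphisms are nonisomorphisms by \Cref{def:decomposition point}), hence by \Cref{prop:easy radical} they lie in $\Rad(\C)$; and \emph{a priori} $g$ or $h$ might again lie in $\I$, in which case we recurse on it. Termination is the delicate part: I would use that $\End_\C(X)/\I(X,X)$ is finite-dimensional together with \Cref{lem: admissible in radical} (so $\I \subseteq \Rad(\C)$) and the fact that an acyclic morphism's factorization through its decomposition point is rigid (unique up to scalars), which prevents the same ``loop'' of decompositions from being traversed infinitely — any non-terminating chain would produce, via acyclicity, an endomorphism of some indecomposable whose powers never enter $\I$, contradicting condition (2) (this is exactly the role of the remark after \Cref{def:admissible} and \Cref{ex: cycles length}). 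Once the chain terminates we have written $q = g_n \circ \cdots \circ g_1$ with each $g_i \notin \I$.

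The main obstacle I expect is twofold. (a) The linear-combination subtlety: $\I(X,Y)$ for indecomposable $X,Y$ consists of sums $\sum_i c_i q_i$ with each $q_i$ a generator-through-a-forbidden-point, and a sum of morphisms that each factor outside $\I$ need not visibly factor outside $\I$. I would handle this by working at the level of $\Hom$-spaces: choose a basis for $\Hom_\C(X,Y)$ adapted to paths/decomposition points, observe that $\I(X,Y)$ is spanned by elements each factoring through a single $Z_\alpha$, and then show the whole subspace $\I(X,Y)$ factors as (image of) $\Hom(Z_\alpha, Y)$ composed with a piece of $\Hom(X, Z_\alpha)$ lying outside $\I$ — using that only finitely many $\mathcal P_\alpha$ are relevant (the admissibility of the collection) so there are finitely many forbidden points to track. (b) Termination of the factorization recursion, which as noted I would reduce to condition (2) via the acyclicity rigidity; this is where I'd expect to spend the most care, matching the treatment of cycles in \Cref{prop:admissible is correct} and its following remark.
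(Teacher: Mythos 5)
Your overall skeleton (condition (2) is a hypothesis; reduce to morphisms of indecomposables; split a morphism in $\I$ through a forbidden decomposition point into two halves and recurse) matches the paper's proof. But your termination argument has a genuine gap, and it is precisely the step you yourself flag as delicate. You propose to rule out a non-terminating recursion by extracting ``an endomorphism of some indecomposable whose powers never enter $\I$'' and appealing to condition (2) and to \Cref{lem: admissible in radical}. This fails for two reasons. First, in the acyclic continuous settings this theorem is designed for (e.g.\ the real line in \Cref{ex:crossing real lines}), a morphism can be factored through decomposition points indefinitely without ever producing a nontrivial endomorphism, and there is no well-founded ``length'' available, so ``lengths cannot increase indefinitely'' has no content. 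Second, invoking \Cref{lem: admissible in radical} presupposes that $\I$ is admissible, which is what you are trying to prove.

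The correct termination mechanism — and the one the paper uses — is the admissibility of the \emph{collection} $\{\mathcal P_\alpha\}$ (\Cref{def:admissible point relations}): each morphism of indecomposables lies in only finitely many $\mathcal P_\alpha$, and this count is the induction parameter. When $f\in\mathcal P_1$ is split as $f = g_2\circ g_1$ with $g_1$ ending at the forbidden point $Z_1$ and $g_2$ starting there, neither half lies in $\mathcal P_1$ (they no longer have $Z_1$ as a decomposition point in the sense of \Cref{def:point relation}); hence each half lies in strictly fewer of the $\mathcal P_\alpha$ than $f$ did, and the recursion bottoms out. In the base case (exactly one relevant $\mathcal P_\alpha$) the halves are outright not in $\I$. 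You do mention the admissibility of the collection, but only to ``track finitely many forbidden points'' for the linear-combination issue, not as the induction parameter — so the load-bearing finiteness is not where you put it. (Your observation (a) about linear combinations of generators is a fair concern, which the paper's own proof also passes over quickly with its ``without loss of generality'' reduction; it is not the missing ingredient here.)
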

\begin{proof}
We satisfy \Cref{def:admissible}(2) by assumption.

Now suppose $f\in \I$; we show that $f$ can be written as a finite composition of morphisms not in $\I$.
Since $f$ is a finite sum of morphisms of indecomposables, we assume without loss of generality $f$ is a morphism of indecomposables.
Then, by assumption there are at most finitely-many $\mathcal P_\alpha$ such that $f\in \mathcal P_\alpha$.

We proceed by induction beginning with $f$ is only in $\mathcal P_1$.
Let $f_1$, $P_1$, and $Z_1$ be as in \Cref{def:point relation}.
Then there exists morphisms $h_1$ and $h_2$ in $Mor(\C)$ and $g\in P_1$ such that $f = h_2\circ g \circ h_1$.
Further, $g=h'_2\circ h'_1$ where the target of $h'_1$ is $Z_\alpha$ and the source of $h'_2$ is $Z_\alpha$.
So, let $g_1 = h'_1\circ h_1$ and let $g_2=h_2\circ h'_2$.
Note that neither $g_1$ nor $g_2$ is in $\mathcal P_1$.
Further, neither $g_1$ nor $g_2$ is in $\I$ or else $f$ would be in another $\mathcal P_\alpha$ as well.
Thus, we have our desired decomposition.

Now assume that if $f$ is in $n$ of the $\mathcal P_\alpha$, then $f$ is a finite composition of morphisms not in $\I$.
Suppose $f$ is in $n+1$ of the $\mathcal P_\alpha$ and denote one of them by $\mathcal P_1$.
Let $f_1$, $P_1$, and $Z_1$ be as before for $\mathcal P_1$.
We find $g_1$ and $g_2$ as before, but they may be in $\I$.
However, each $g_1$ and $g_2$ may only be in $n$ or fewer $\mathcal P_\alpha$ and so are a finite composition of morphisms not in $\I$.
Therefore, $f$ is a finite composition of morphisms not in $\I$.
\end{proof}

\begin{example}[Discrete quiver]\label{ex:monomial points}
Let $Q$ be a discrete quiver. Then any quadratic monomial relation in $Q$ corresponds to a point relation in the $\kk$-linear categorification of $Q$.

In particular, any gentle algebra can be obtained by considering a quiver with point relations. 
\end{example}

\begin{example}[Continuous ``gentle'', crossing real lines]\label{ex:crossing real lines}
Consider two copies of the real line, labeled $\R$ and $\R'$. We label the numbers in $\R$ by $x$ and the numbers in $\R'$ by $x'$. Identify $0$ and $0'$ and label the category of $\kk$-representations  of the resulting partially ordered set by $\C$. 
\begin{figure}
    \centering
    \begin{tikzpicture}[inner sep = 0cm, outer sep = 0cm, xscale = 1, decoration={
    markings,
    mark=at position 0.5 with {\arrow{>}}}, very thick
    ]
    \coordinate (topleft) at (0,1);
    \coordinate (bottomleft) at (0,0);
    \coordinate (topright) at (10,1);
    \coordinate (bottomright) at (10,0);
    \coordinate (mid) at (5,0.5);
    \draw[blue, postaction=decorate] (topleft) ..controls +(4,0).. (mid); 
    \draw[red, postaction=decorate] (mid) ..controls +(1,-.5).. (bottomright);
    \draw[red, postaction=decorate] (bottomleft)  ..controls +(4,0).. (mid); 
    \draw[blue, postaction=decorate] (mid) ..controls +(1,.5).. (topright);
    \draw[fill=white] (mid) circle[radius=1mm];
    \end{tikzpicture}
    \caption{The category considered in \Cref{ex:crossing real lines}. The two copies of the real line have been drawn in different colours}
    \label{fig:my_label}
\end{figure}
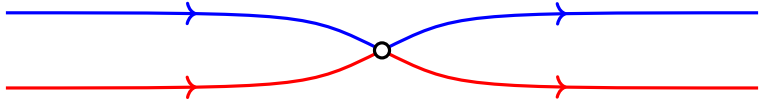

Let $\mathcal P$ be the point relation at $0$ generated morphisms starting in $\R_{<0}$ and ending in  $\R'_{>0}$. Dually, let $\mathcal P'$ be the point relation at $0$ generated morphisms starting in $\R'_{<0}$ and ending in  $\R_{>0}$. The collection $\{\mathcal P, \mathcal P'\}$ generates an admissible ideal.
In later work, we will argue that this $\C$ with this ideal yields a continuous generalization of a gentle algebra. 

\begin{remark}
If we do not assume that $\End_{\C}(X) / \I(X,X)$ is finite dimensional in our hypothesis of \Cref{thm:point relations are admissible}, it is possible that we do not have an admissible ideal.
See \Cref{ex:big wedge}.
\end{remark}

\end{example}
\subsection{Length Relations}\label{subsec:length}
We now generalize relations generated by all paths of a certain length to length relations.
To do this we define a way of measuring length in our category (\Cref{def:weakly archimedean monoid,def:category with length in Lambda}) and provide examples (\Cref{ex:weakly Archimedian monoids,ex:categories with length in Lambda}).
Then we define the length relations (\Cref{def:length relation}) and provide examples (\Cref{ex:length relations}) and prove that length relations generate admissible ideals (\Cref{thm:length relations are admissible}).
In \Cref{apx:length} we discuss the proof of \Cref{thm:length relations are admissible} (\Cref{subsec:need weakly archimedean}), why we require the specific setup that we have (\Cref{sec:more on length}), and compare our notion of length to the notion of a metric on a category, introduced by Lawvere \cite{L73} (\Cref{subsec:length vs metric}).

Recall a commutative monoid $\Lambda$ is a set with an associative, commutative, binary operation $+_{\Lambda}:\Lambda\times\Lambda \to \Lambda$ and an identity 0.
\begin{definition}\label{def:weakly archimedean monoid}
    Let $\Lambda$ be a commutative monoid.
    We say $\Lambda$ is \emph{weakly Archimedian} if it satisfies the following.
    \begin{itemize}
        \item There is a total order $\leq$ on $\Lambda$.
        \item If $\lambda\neq 0$ then $\lambda>0$.
        \item If $\lambda_1>\lambda_2$ then, for any $\lambda_3$, we have $\lambda_1+\lambda_3> \lambda_2+\lambda_3$ or $\lambda_1+\lambda_3 = \lambda_2+\lambda_3=\max \Lambda$.
        \item For all $0<\lambda_1<\lambda_2$ in $\Lambda$, there exists $n\in\N$ such that
        \begin{displaymath}
            n\lambda_1 := \underbrace{\lambda_1 +_{\Lambda} \lambda_1 +_{\Lambda} \cdots +_{\Lambda} \lambda_1}_{n} \geq \lambda_2.
        \end{displaymath}
    \end{itemize}
\end{definition}

\begin{example}\label{ex:weakly Archimedian monoids}
    We give three examples, two of which the reader might expect.
    \begin{enumerate}
        \item The set $\N$ with the usual total order and $+_{\N}$ given in the usual way is weakly Archimedian.
        \item The set $\R_{\geq 0}$ with the usual total order and $+_{\R}$ given in the usual way is weakly Archimedian.
        \item\label{ex:weakly Archimedian monoids:with max} Let $\Lambda = \{0,1,2,\ldots,n-1,n,\infty\}$. Let $+_{\Lambda}$ be given by
        \begin{displaymath}
            \lambda_1+_{\Lambda}\lambda_2 = \begin{cases}
                \lambda_1+_{\N}\lambda_2 & (\lambda_1+_{\N}\lambda_2)\leq n \\
                \infty & \text{otherwise}.
            \end{cases}
        \end{displaymath}
        For the total order, we say $0<1<2<\cdots<n-1<n<\infty$.
        Then $\Lambda$ is weakly Archimedian.
    \end{enumerate}
\end{example}
When the weakly Archimedian monoid \textcolor{karin}{structure} is clear, we write $+$ instead of $+_{\Lambda}$.

\begin{definition}\label{def:stem category}
    Let $\widehat{\C}$ be a small category and ignore any additional structure.
    The \emph{additive $\Bbbk$-linearization} $\C$ of $\widehat{\C}$ is the category such that
    \begin{itemize}
        \item $\Ob(\C)$ contains finite direct sums of objects in $\widehat{\C}$ and
        \item $\Hom_{\C}(X,Y) = \langle \Hom_{\widehat{\C}}(X,Y) \rangle_\Bbbk$ for indecomposables $X,Y\in\Ob(\C)$.
    \end{itemize}
    The other Hom spaces and composition are given by extending bilinearly and using composition in $\widehat{\C}$.
    
    Let $\C$ be a small $\Bbbk$-linear category.
    We say subcategory $\widehat{\C}$ of $\C$ is a \emph{stem category} of $\C$ if $\C$ is isomorphic to the additive $\Bbbk$-linearization of $\widehat{\C}$.
\end{definition}
This is similar to the use of ``stem category'' in \cite{Bongartz}.
We note that Bongartz considered stem categories of locally bounded categories.
We say $\C$ is \emph{locally bounded} if, for each object $X\in\Ob(\C)$, we have \[\dim_{\Bbbk}\left( \bigoplus_{Y\in\Ob(\C)} \Hom_{\C}(X,Y)\oplus\Hom_{\C}(Y,X) \right) <\infty. \]
Immediately, we note that when we consider the $\Bbbk$-linearization of $\R$ as a category, this condition fails.
In fact, many categories in the present paper are not locally bounded, especially those considered in \Cref{sec:examples}.

\begin{remark}\label{rmk:quiver additive linearization}
    If we consider a quiver $Q$ as a category $\widehat{\Q}$, then the additive $\Bbbk$-linearization of $\widehat{\Q}$ is precisely the $\Bbbk$-linear categorification $\Q$ of $Q$.
\end{remark}

\begin{definition}\label{def:category with length in Lambda}
    Let $\C$ be a $\Bbbk$-linear category, $\widehat{\C}$ be a stem category of $\C$, and $\Lambda$ a weakly archimedian monoid.
    
    We say \emph{$\C$ has length in $\Lambda$} if there is a function $\ell:\Mor(\widehat{\C})\to\Lambda$ satisfying the following.
    \begin{enumerate}
        \item If $f\in\Mor(\widehat{\C})$ is an isomorphism then $\ell(f)=0$.
        \item For each $f,g\in\Mor(\widehat{\C})$ such that $g\circ f\in\Mor(\widehat{\C})$, we have $\ell(g\circ f)=\ell(g)+\ell(f)$.
        \item For each $f\in\Mor(\widehat{\C})$ with $\ell(f)>0$ and for each $\lambda<\ell(f)$ there are $g,h\in\Mor(\widehat{\C})$ such that $f=h\circ g$ and either $\ell(g)=\lambda$ or $\ell(h)=\lambda$.
    \end{enumerate}
\end{definition}

\begin{example}\label{ex:categories with length in Lambda}
    We give some existing examples of \Cref{def:category with length in Lambda}.
    \begin{enumerate}
        \item\label{ex:categories with length in Lambda:classic} Let $Q$ be a quiver, $\Q$ the $\Bbbk$-linear categorification of $Q$, and $\widehat{\Q}$ a stem category of $\Q$ whose morphisms are generated by arrows.
        Let $\Lambda=\N$ and set $\ell(\alpha)=1$ for each morphism in $\widehat{\Q}$ from an arrow $\alpha$ in $Q$.
        Then $\Q$ has length in $\N$.
        \item\label{ex:categories with length in Lambda:AR} Let $\Q$ be the additive $\Bbbk$-linearization of a continuous quiver $\widehat{\Q}$ of type $A$ as in \cite{IRT}.
        Then $\widehat{\Q}$ is a stem category of $\Q$.
        
        Define $\ell: \Mor(\widehat{\Q})\to\R_{\geq 0}$ by $\ell(g_{x,y}) = |x-y|$ where $g_{x,y}$ is the unique nonzero morphism in $\widehat{\Q}$ from $x$ to $y$.
        Then $\Q$ has length in $\R_{\geq 0}$.
        Note $\Q$ is also a category with a metric.
        See \Cref{subsec:length vs metric}.
    \end{enumerate}
\end{example}

We now define a length relation.
\begin{definition}\label{def:length relation}
    Let $\Lambda$ be a weakly Archimedian monoid and $\C$ a category with length in $\Lambda$ with stem category $\widehat{\C}$.
    Consider $\Lambda_1,\Lambda_2$ subsets of $\Lambda$ such that $\Lambda_1\amalg\Lambda_2=\Lambda$, $|\Lambda_1|\geq 2$, and for all $\lambda_1\in\Lambda_1, \lambda_2\in\Lambda_2$ we have $\lambda_1<\lambda_2$.
    Then the set $\ell^{-1}(\Lambda_2)$ in $\widehat{\C}$ generates an ideal $\I$ in $\C$.
    We call $\I$ a \emph{length relation}.
\end{definition}

\begin{remark}\label{rmk:length is not a number}
    It is possible that $\Lambda_1$ has no maximum element and $\Lambda_2$ has no minimum element.
    (Consider, for example, $\Lambda=\mathbb{Q}_{\geq 0}$.)
    Thus, we may not always be able to say that we are taking ``paths longer than $\lambda$'' for some $\lambda\in\Lambda$.
\end{remark}

\begin{example}\label{ex:length relations}
    We give three examples of length relations.
    \begin{enumerate}
        \item Let $Q$ be a quiver and $\Q$ its $\Bbbk$-linear categorification, which has length in $\N$ (\Cref{ex:categories with length in Lambda}(\ref{ex:categories with length in Lambda:classic})).
        Let $\widehat{\Q}$ be the stem category of $\Q$ seen, effectively, as $Q$ embedded in $\Q$.
        Let $\Lambda_1 = \{0,1,2\}$ and $\Lambda_2=\{3,4,5,\cdots\}$.
        Then $\I=\langle \ell^{-1}(\Lambda_2)\rangle $ is the set of morphisms in $\Q$ generated by paths with length $\geq 3$ in $Q$.
        \item Any Nakayama algebra where the relations have constant length $l$ can be realized as the $\Bbbk$-linear categorification of its underlying quiver with length relations of length $l$ in $\N$.
        \item\label{ex:length relations:continuous A} Let $\Q$ and $\widehat{\Q}$ be as in \Cref{ex:categories with length in Lambda}(\ref{ex:categories with length in Lambda:AR}).
        Recall $\Q$ has length in $\R_{\geq 0}$.
        Let $\Lambda_1 =[0,4]$ and $\Lambda_2=(4,+\infty)$.
        Then $\langle\ell^{-1}(\Lambda_2)\rangle$ is the set of morphisms in $\Q$ of length \emph{strictly greater than $4$}.
    \end{enumerate}
\end{example}

\begin{theorem}\label{thm:length relations are admissible}
    Let $\Lambda$ be a weakly Archimedian monoid, $\C$ a category with length in $\Lambda$ with stem category $\widehat{\C}$, and $\I$ a length relation.
    If $\End_{\widehat{\C}}(X)$ is a finitely-generated monoid, for each $X\in\Ob(\widehat{\C})$, then $\I$ is an admissible ideal.
\end{theorem}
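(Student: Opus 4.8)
The plan is to verify conditions (1) and (2) of \Cref{def:admissible} for the ideal $\I=\langle\ell^{-1}(\Lambda_2)\rangle$, where $\Lambda=\Lambda_1\amalg\Lambda_2$ is the partition of \Cref{def:length relation}; I assume $\Lambda_2\neq\emptyset$, the case $\Lambda_2=\emptyset$ (so $\I=0$) being a separate triviality. I would first record two preliminaries. Since $\lambda\neq 0$ forces $\lambda>0$, the minimum element $0$ lies in $\Lambda_1$, and as $|\Lambda_1|\geq 2$ we may fix $\mu\in\Lambda_1$ with $\mu>0$. Second, and crucially, \emph{a morphism of $\widehat{\C}$ whose length lies in $\Lambda_1$ is never in $\I$}: unwinding the generated ideal and using bilinearity, $\I$ is the $\kk$-span of composites $h\circ m\circ g$ of stem morphisms with $\ell(m)\in\Lambda_2$, and by additivity of $\ell$ together with order-compatibility of $+_\Lambda$ such a composite has length $\geq\ell(m)$, hence in $\Lambda_2$ since $\Lambda_2$ is an up-set; since the stem morphisms between indecomposables form a $\kk$-basis of the ambient Hom space (\Cref{def:stem category}), no stem morphism of length in $\Lambda_1$ can be a $\kk$-combination of ones of length in $\Lambda_2$. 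In particular $1_X\notin\I(X,X)$, and $\I(X,Y)$ is exactly the $\kk$-span of the stem morphisms $X\to Y$ of length in $\Lambda_2$.

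For condition (1) the heart of the matter is a \emph{cutting lemma}: every stem morphism $w$ with $\ell(w)\in\Lambda_2$ is a finite composite of stem morphisms each of length in $\Lambda_1$. To prove it I would iterate axiom (3) of \Cref{def:category with length in Lambda}: as $\ell(w)>0$ and $\mu<\ell(w)$ (every element of $\Lambda_1$ is below every element of $\Lambda_2$), write $w=h\circ g$ with $\ell(g)=\mu$ or $\ell(h)=\mu$, split off that length-$\mu$ factor, and recurse on the other, whose length $\lambda'$ satisfies $\lambda'+\mu=\ell(w)$. After $t$ steps the surviving factor has length $\lambda^{(t)}$ with $t\mu+\lambda^{(t)}=\ell(w)$, so $t\mu\leq\ell(w)$; the weakly Archimedean axiom of \Cref{def:weakly archimedean monoid} supplies $n$ with $n\mu\geq\ell(w)$, so taking $t=n+1$ is impossible unless the recursion has already stopped --- provided $\ell(w)$ is not a maximal element of $\Lambda$. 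Granting the lemma, condition (1) is bookkeeping: an arbitrary $f\in\I$ is a finite sum of composites $h\circ m\circ g$ of stem morphisms with $\ell(m)\in\Lambda_2$; cutting $m$ (and $g,h$ too, if their lengths lie in $\Lambda_2$) turns each summand into a composite of length-$\Lambda_1$ stem morphisms; padding these chains to a common length with identity morphisms and assembling them into block matrices exhibits $f$ as a composite $g_N\circ\cdots\circ g_1$. Each $g_i$ has a matrix entry that is a length-$\Lambda_1$ stem morphism or an identity, hence not in $\I$, and a morphism between direct sums lies in $\I$ iff all its entries do, so $g_i\notin\I$; reducing to indecomposable source and target is the usual component-wise step.

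The step I expect to be the real obstacle is the termination of the cutting lemma when $\ell(w)$ is a maximal element of $\Lambda$: then the identity $t\mu+\lambda^{(t)}=\max\Lambda$ places no constraint on $\lambda^{(t)}$, and splitting off fixed-length pieces need not make the remainder short, so one must somehow rule out a morphism each of whose factorizations into two has a factor of length $\max\Lambda$. I expect this needs genuinely more than axiom (3) --- e.g. the finite-generation hypothesis, or a restriction preventing any morphism from attaining length $\max\Lambda$ --- and is precisely the point discussed in \Cref{subsec:need weakly archimedean}.

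For condition (2), fix an indecomposable $X$ and generators $s_1,\dots,s_r$ of the monoid $\End_{\widehat{\C}}(X)$, so that $\End_\C(X)$ is $\kk$-spanned by the words $s_{i_1}\cdots s_{i_t}$, of length $\sum_k\ell(s_{i_k})$. Let $\mu_0$ be the least positive value among $\ell(s_1),\dots,\ell(s_r)$ (length-$0$ generators, which in the cases of interest are just the isomorphisms, being handled separately). A word with at least $m_0$ letters of positive length has length $\geq m_0\mu_0$, and the weakly Archimedean axiom gives $m_0$ with $m_0\mu_0\in\Lambda_2$; so every such word lies in $\I$ by the second preliminary. Hence $\End_\C(X)/\I(X,X)$ is spanned by the finitely many words using fewer than $m_0$ positive-length letters, so it is finite-dimensional. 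Note that here a maximal element of $\Lambda$ causes no trouble, since we bound the number of factors rather than their individual lengths.
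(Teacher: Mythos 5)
Your proposal follows the same route as the paper's own proof: condition (1) is verified by repeatedly splitting off a factor of fixed positive length $\mu\in\Lambda_1$ via axiom (3) of \Cref{def:category with length in Lambda} and bounding the number of steps by the weakly Archimedean axiom, and condition (2) by showing that any sufficiently long word in the generators of $\End_{\widehat{\C}}(X)$ has length in $\Lambda_2$ and hence vanishes in the quotient. Your write-up is in fact more careful than the paper's on several points it leaves implicit: the verification that a stem morphism of length in $\Lambda_1$ cannot lie in $\I$ (which is what makes the factors $g_i$ avoid $\I$), the reduction from a general element of $\I$ to stem morphisms and the block-matrix reassembly, and the caveat about length-$0$ generators in condition (2) (the paper's quantity $\min_n\{n\ell(f_i)\mid n\ell(f_i)\in\Lambda_2\}$ is an empty minimum for such a generator).

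The obstacle you flag --- termination of the cutting procedure when $\ell(w)=\max\Lambda$ --- is genuine, and the paper's proof does not close it: having produced $n$ with $n\lambda\geq\ell(f)$, it simply asserts that ``there is some decomposition $f=g_n\circ\cdots\circ g_1$ where $\ell(g_i)\in\Lambda_1$''. Your inequality argument justifies that assertion whenever $\ell(f)<\max\Lambda$ or $\Lambda$ has no maximum (as for $\N$ and $\R_{\geq 0}$), but when $\ell(f)=\max\Lambda$ the identity $t\mu+\lambda^{(t)}=\max\Lambda$ carries no information and the remainder can stay at $\max\Lambda$ indefinitely. This can actually happen: take $\widehat{\C}$ to be the poset $\{-\infty\}\cup\mathbb{Z}_{\leq 0}$, take $\Lambda=\{0,1,\dots,n,\infty\}$ as in \Cref{ex:weakly Archimedian monoids}(\ref{ex:weakly Archimedian monoids:with max}), and set $\ell(x\to y)=y-x$ (truncated to $\infty$ when $y-x>n$) and $\ell(-\infty\to y)=\infty$; then every factorization of $-\infty\to 0$ in $\widehat{\C}$ retains a factor of length $\infty$, even though $\Lambda$ is weakly Archimedean and all endomorphism monoids are trivial. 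So the statement needs an additional hypothesis --- e.g.\ that no stem morphism attains $\max\Lambda$, or that $\Lambda$ has no maximum --- which all of the paper's intended examples satisfy. With that hypothesis added, your argument is complete and coincides with the intended one.
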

\begin{proof}
    If $\I=\emptyset$ then condition (1) is vacously satisfied.
    Assume $\I\neq\emptyset$, let $f\in \I$ such that $f\in\Mor(\widehat{\C})$, and let $\lambda\in\Lambda_1$ such that $\lambda>0$.
    Then there is $n\in\N$ such that $n\lambda \geq \ell(f)$.
    Thus, there is some decomposition $f=g_n\circ\cdots\circ g_1$ where $\ell(g_i)\in\Lambda_1$ for each $g_i$.
    Thus, each $g_i$ is not in $\I$.
    
    Since $\End_{\widehat{\C}}(X)$ is a finitely-generated monoid, let $m$ be the number of generators and let $\{f_i\}_{i=1}^m$ be the set of generators.
    Let
    \[ N = \max_i \{ \min_n \{ n\ell(f_i) \mid n\ell(f_i)\in\Lambda_2\}\}.\]
    Then
    \[ \dim_{\Bbbk}(\End_{\C}(X) / \I(X,X)) \leq m\cdot N + 1,\]
    where we need the ``$+1$'' to account for the identity in $\End_{\widehat{\C}}(X)$.
    Therefore, $\I$ is an admissible ideal.
\end{proof}

\section{Examples}\label{sec:examples}
\begin{figure}[b]
    \centering
    \begin{subfigure}[b]{0.4\textwidth}
        \centering
        
\begin{tikzpicture}[xscale=2]
\node (1) at (0,0) {1};
\node (2) at (1,1) {2};
\node (3) at (1,0) {3};
\node (4) at (1,-1) {4};
\node (5) at (2,0) {5};

\draw[->] (1) -- node[pos=0.6, above]{$\alpha_1$} (2);
\draw[->] (1) -- node[pos=0.7, above]{$\beta_1$} (3);
\draw[->] (1) -- node[pos=0.6, above]{$\gamma_1$} (4);
\draw[->] (2) -- node[pos=0.4, above]{$\alpha_2$} (5);
\draw[->] (3) -- node[pos=0.3, above]{$\beta_2$} (5);
\draw[->] (4) -- node[pos=0.4, above]{$\gamma_2$} (5);
\end{tikzpicture}
        \caption{}
        \label{fig:three-path-discrete}
    \end{subfigure}
    \begin{subfigure}[b]{0.4\textwidth}
        \centering
        \begin{tikzpicture}[xscale=2, inner sep = 0cm, outer sep = 0cm, very thick,decoration={
    markings,
    mark=at position 0.5 with {\arrow{>}}}
    ]
\node (start) at (0,0) {$\bullet$};
\node (end) at (2,0) {$\bullet$};
\node at (-.1,0){$0$};
\node at (2.1,0){$1$};

\draw[postaction={decorate}] (start) .. controls (.5,1.3) and (1.5,1.3).. node[above=2pt, pos=0.6]{$\alpha$} (end);
\draw[postaction={decorate}] (start) -- node[pos=0.6, above=2pt]{$\beta$} (end);
\draw[postaction={decorate}] (start) .. controls (.5,-1.3) and (1.5,-1.3).. node[pos=0.6, above=2pt]{$\gamma$} (end);
\end{tikzpicture}
        \caption{}
        \label{fig:three-path-continuous}
    \end{subfigure}
    \caption{The quivers considered in \Cref{ex: discrete 3 paths,ex: cts 3 paths}, respectively.}
    \label{fig:three-path-quivers}
\end{figure}

\begin{example}\label{ex: discrete 3 paths}
Consider the (discrete) quiver $Q$ shown in \Cref{fig:three-path-discrete}.
The relation $\alpha_2\alpha_1-2\beta_2\beta_1+3\gamma_2\gamma_1$ generates an admissible ideal in the classical sense. In the $\kk$-linear categorification of $Q$, we rewrite the relation as the composition of morphisms
\[1\xrightarrow{\left [ \begin{smallmatrix}\alpha_1 \\ -2\beta_1\\3\gamma_1\end{smallmatrix}\right]} 2\oplus 3\oplus 4\xrightarrow{\left [ \begin{smallmatrix}\alpha_2 & \beta_2 &\gamma_2\end{smallmatrix}\right]}  5,
    \]
    which generates an admissible ideal
    
\end{example}

\begin{example} \label{ex: cts 3 paths}

Consider the continuous analogue of \Cref{ex: discrete 3 paths}, displayed in \Cref{fig:three-path-continuous}.
We consider a similar relation $\alpha-2\beta+3\gamma$.
Let $X$, $Y$, and $Z$ be points on the interior of the paths $\alpha$, $\beta$, and $\gamma$, respectively.
Then $\alpha=\alpha_2\alpha_1$ where $\alpha_1:0\to X$ and $\alpha_2:X\to 1$.
We similarly write $\beta=\beta_2\beta_1$ and $\gamma=\gamma_2\gamma_1$.
Then the relation $\alpha-2\beta+3\gamma$ can be written as the composition
\[0
\xrightarrow{\left [ \begin{smallmatrix}\alpha_1 \\ -2\beta_1\\3\gamma_1\end{smallmatrix}\right]}
X\oplus Y\oplus Z
\xrightarrow{\left [ \begin{smallmatrix}\alpha_2 & \beta_2 &\gamma_2\end{smallmatrix}\right]}
1,
    \]
and it generates an admissible ideal.
\end{example}

\begin{example}[Real line with point relations on integers]\label{ex:real line integers}
Let $\Q$ be the additive $\Bbbk$-linearization of $\R$ as a category where paths move upwards.
For a point $c\in \R$, let the (unique) point relation at $c$ be $\mathcal P_c$. The collection of point relations on the integers, $\{\mathcal P_z\}_{z\in \mathbb{Z}}$ generates an admissible ideal by \Cref{thm:point relations are admissible}.

The ``Auslander--Reiten space'' of the representations of this quiver is shaped like a mountain range; it is a set of triangles joined at their bottom vertices, see \Cref{fig:AR mountain range}
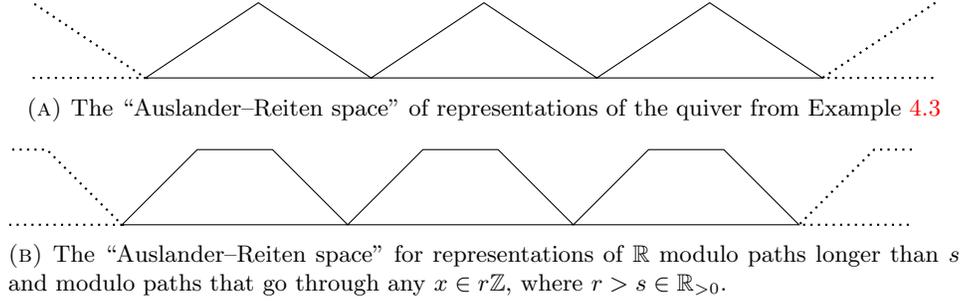
\begin{figure}
    \centering
    \begin{subfigure}{\textwidth}
    \centering
    \begin{tikzpicture}
        \draw[dotted, thick] (-1.5,0) -- (0,0)--(-1.5,1);
        \draw[dotted,thick] (10.5,1) -- (9,0) -- (10.5,0);
        \draw[] (0,0) -- (1.5,1)--(3,0)--(4.5,1)--(6,0)--(7.5,1)--(9,0)--(0,0);
    \end{tikzpicture}
    \caption{The ``Auslander--Reiten space'' of representations of the quiver from \Cref{ex:real line integers}}
    \label{fig:AR mountain range}
    \end{subfigure}
    
    \begin{subfigure}{\textwidth}
    \begin{tikzpicture}
    \draw[color=white] (0,0)--(0,1.3);
        \draw[dotted, thick] (-1.5,0) -- (0,0)--(-1,1)--(-1.5,1);
        \draw[dotted,thick] (10.5,1) -- (10,1) -- (9,0) -- (10.5,0);
        \draw[] (0,0) -- (1,1)--(2,1)--(3,0)--(4,1)--(5,1)--(6,0)--(7,1)--(8,1)--(9,0)--(0,0);
    \end{tikzpicture}
    
    \caption{The ``Auslander--Reiten space'' for representations of $\R$ modulo paths longer than $s$ and modulo paths that go through any $x\in r\Z$, where $r>s\in\R_{>0}$.}\label{fig:AR chopped mountains}
    \end{subfigure}
    \caption{The ``Auslander--Reiten spaces'' of representations of the quivers in \Cref{ex:real line integers,ex:real line points}}
    \label{fig:AR spaces}
\end{figure}
\end{example}

\begin{example}[Circle with length/Kupisch relations]\label{ex:circle length}
    Let $\Q$ be the additive $\Bbbk$-linearization of a continuous quiver $\widehat{\Q}$ of type $\widetilde{A}$ as in \cite{HR}.
    Define $\ell: \Mor(\widehat{\Q})\to\R_{\geq 0}$ by $\ell(f)=\phi-\theta + 2n\pi$ where $f:e^{i\theta}\to e^{i\phi}$, and $0\leq \phi-\theta<2\pi$, and $n$ is the number of full rotations around the circle at $e^{i\theta}$ before moving to $e^{i\phi}$.
    Then $\Q$ has length in $\R_{\geq 0}$.
    If $\Q$ is acyclic, we may replace $\R_{\geq 0}$ with $\Lambda=[0,2\pi)\cup\{\infty\}$ and define $+_\Lambda$ similarly to \Cref{ex:weakly Archimedian monoids}(\ref{ex:weakly Archimedian monoids:with max}).
    
    \begin{figure}
        \centering
        \begin{tikzpicture}[very thick, decoration={markings,mark=at position 0.5 with {\arrow{>}}}]
            \draw[radius=3cm] (0,0) circle;
            \draw[radius=2.8cm, color=red,postaction=decorate] (180:2.8cm) arc[start angle =180, delta angle= 180] node[above left, pos=0.666, red]{\XSolidBrush};
            \draw[radius=3.2cm, color=blue, postaction=decorate] (0:3.2cm) arc[start angle = 0, delta angle= 90] node[above right, pos=0.666, blue]{\CheckmarkBold};
            \foreach \x in {0, 30,...,360}
            	\draw[thin] (\x:2.9cm) -- (\x:3.1cm);  
        \end{tikzpicture}
        \caption{The circle with length relations as described in \Cref{ex:circle length}. In this figure, the relations have length $\frac{2\pi}{3}$.}
        
        \label{fig:circle length}
    \end{figure}
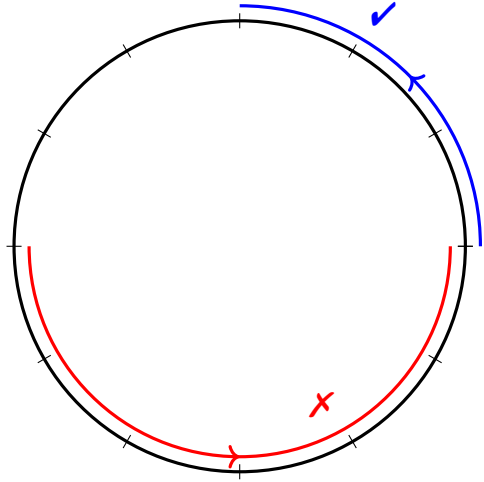
    
    Now assume $\widehat{\Q}$ has cyclic orientation.
    Let $\kappa$ be a Kupisch function as in \cite[Definition~3.9]{RZ}.
    That is, $\kappa:\R\to\R_{> 0}$ is a function such that $\kappa(t)+t>t$ and $\kappa(t+1)=\kappa(t)$, for all $t\in\R$.
    This yields a map $\mathbb{S}^1\to \R_{>0}$ where $\mathbb{S}^1=[0,1]/\{0\sim 1\}$.
    If $\kappa$ is constant with value $a$, then this yields a length relation where $\Lambda_1=[0,a]$ and $\Lambda_2=(a,+\infty)$.
    If $\kappa$ is not constant, then we do not have a length relation.
    However, if $\kappa$ does not have any separation points \cite[Definition~4.2]{RZ}, then $\kappa$ still induces an admissible ideal.
\end{example}

\begin{example}[Real line with length and point relations]\label{ex:real line points}
    Let $\Q$ be the additive $\Bbbk$-linearization of $\R$ as a category where paths move upwards.
    Let $r,s$ be positive real numbers and for each $x\in r\Z\subset \R$, let $\mathcal P_x$ be the (unique) point relation in $\Q$ through $x$ and $\I$ the admissible ideal generated by $\bigcup_{x\in r\Z} \mathcal{P}_x$.
    Let $\Jay$ be the the length relation in $\Q/\I$ obtained by modding out by paths of length greater than $s$.
    By \Cref{thm:length relations are admissible,thm:point relations are admissible} with \Cref{lem:stack admissible} we obtain an admissible ideal $\tJay$ given by the point relations at each $x\in r\Z$ and paths of length greater than $s$.
    
    If $r\leq s$ then $\C/\I = \C/\tJay$ since we cannot have a morphism of length greater than $r$ in $\C/\I$ anyway.
    If $r>s$ then we obtain paths of of length less than or equal to $s$ that do not pass through any $x\in r\Z$.
    The ``Auslander--Reiten space'' for the case $r>s$ is in \Cref{fig:AR chopped mountains}; notice the similarity with \Cref{fig:AR mountain range}.
\end{example}

\begin{example}[Complications with Cycles]\label{ex: cycles length}
    For each $n\in\N$, let $\C_n$ be circle whose radius is $\frac{1}{2}e^{-n}$.
    Let $\C$ be the additive $\kk$-linearization of $\R_{\leq 0}\amalg(\coprod_{n\in\N} \C_n)/ \sim$, where $\R\ni -n\sim 0\in\C_n$.
    See \Cref{fig:cycles with decreasing length} for a visual depiction.
    
    We see that $\C$ has length in $\R_{\geq 0}$.
    Let $\I$ be a length relation.
    Since our length is in $\R_{\geq 0}$ we can say we are modding out by length $>L$ or $\geq L$ for some $L>0\in\R$.
    
    Notice that for each $N\in\N$, there exists some $\C_n$ with radius $r$ such that $Nr < L$.
    Therefore, there is no natural number $n$ such that for all nonisomorphism endomorphisms $f$ we have $f^n\in\I$.
    
    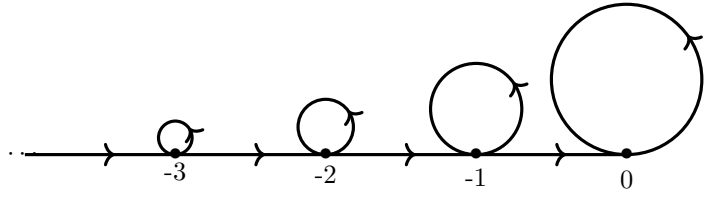
\begin{figure}
        \centering
        \begin{tikzpicture}[scale=2, decoration={markings, mark=at position 0.6 with {\arrow{>}}}]
    \foreach \x in {0, -1,...,-3}{
    	\node[outer sep=4+\x] (\x) at (\x,0){$\bullet$};
    	\node at (\x.south){\x};}
    \node (-4) at (-4,0){$\cdots$};
    \foreach \x [remember=\x as \y (initially 0)]in {-1, -2,...,-4}
    	\draw[very thick, postaction=decorate] (\x.center)--(\y.center);
    \foreach \x [evaluate=\x as \r using .5*e^(\x/2)] in {0, -1,...,-3}
         \draw[radius=-\r, very thick, postaction=decorate](\x)+(0,\r ) circle;
    \end{tikzpicture}
        \caption{Illustration of $\C$ in \Cref{ex: cycles length}, where we have glued smaller and smaller circles to each non-positive integer $n$ in $\R_{\leq 0}$.}
        \label{fig:cycles with decreasing length}
    \end{figure}
\end{example}

\begin{example}[Big wedge]\label{ex:big wedge}
    Let $\C$ be a cyclic continuous quiver of type $\widetilde{A}$ as in \cite{HR}.
    Let $\widehat{\Q} = (\coprod_{\N} \C )/\sim$ where we join all the copies of the $\C$ together at one point.
    Denote the wedge point by $X$.
    Let $\Q$ be the additive $\Bbbk$-linearization of $\widehat{\Q}$.
    Let us discuss the contruction of an admissible ideal out of point relations and a length relation.

    Notice $\Q$ has length in $\R_{\geq 0}$.
    However, since the endomorphism ring of $X$ is an infinitely-generated monoid in $\widehat{\Q}$, we see that $\End_{\C}(X)/ \I(X,X)$ is not finite-dimensional.
    Instead, we must add a point relation on all but finitely-many different copies of $\C$.
    If we do not have a point relation on all the cycles, we also need a length relation.
    Without such a combination, it is not possible to build an admissible ideal out of point relations and a length relation.
\end{example}

\subsection{The Real Plane}\label{subsec:real plane}
    We now consider a continuous version of the grid quiver with commutativity relations as examined in \cite{BBOS}.
    Let $\widehat{\Q}$ be the category whose objects are points in $\R^2$.
    
    We now define the $\Hom$ set between an arbitrary pair of points $(x,y)$ and $(z,w)$.
    Hom sets are given by considering paths made of up of horizontal and vertical line segments.
    For a pair $(x,y)$ and $(z,w)$, consider the set $P_{x,y}^{z,w}$ of all finite sequences $\{(x_i,y_i)\}_{i=i}^n$ such that
    \begin{itemize}
        \item $(x_1,y_1)=(x,y)$ and $(x_n,y_n)=(z,w)$,
        \item $x_1\leq x_2\leq\cdots \leq x_n$ and $y_1\leq y_2\leq\cdots \leq y_n$,
        \item $(x_i,y_i)\neq (x_{i+1},y_{i+1})$ for all $1\leq i<n$,
        \item $x_1=x_2$ or $y_1=y_2$,
        \item for all $1\leq i<n-1$, if $x_i=x_{i+1}$ then $y_{i+1}=y_{i+2}$, and
        \item for all $1\leq i<n-1$, if $y_i=y_{i+1}$ then $x_{i+1}=x_{i+2}$
    \end{itemize}
    We define \[\Hom_{\widehat{\Q}}((x,y),(z,w))=P_{x,y}^{z,w}.\]
    
    Note that $P_{x,y}^{z,w}$ may be empty.
    If either $x>z$ or $y>w$, then
    \[ \Hom_{\widehat{\Q}}((x,y), (z,w)) = \emptyset.\]
    If (i) $x=z$ and $y< w$ or (ii) $x< z$ and $y=w$, then
    \[ \Hom_{\widehat{\Q}}((x,y), (z,w)) = \left\{ \{ (x,y), (z,w) \}\right\}. \]
    If $(x,y)=(z,w)$ then
    \[ \Hom_{\widehat{\Q}}((x,y), (x,y)) = \left\{ \{ (x,y) \}\right\}. \]
    Composition is given by concatenating sequences and, if necessary, deleting a repeated term.
    
    Let $\Q$ be the additive $\Bbbk$-linearization of $\widehat{\Q}$ and let $(x,y),(z,w)\in \R^2$ such that $x<z$ and $y<w$.
    Then, we define
    \[
    \I((x,y),(z,w)) = \left\langle \{(x_i,y_i)\}_{i=1}^m - \{(x'_j,y'_j)\}_{j=1}^n \mid \{(x_i,y_i)\}_{i=1}^m \neq \{(x'_j,y'_j)\}_{j=1}^n \right\rangle.
    \]
    If $x=z$ or $y=w$, then $\I((x,y),(z,w))=0$.
    
    Consider
    \begin{align*}
        \{(x_i,y_i)\}_{i=1}^m - \{(x'_j,y'_j)\}_{j=1}^n &\in \I((x,y),(z,w)) \\
        \{(z_k,y_k)\}_{k=1}^p - \{(z'_l,z'_l)\}_{l=1}^q &\in \I((z,w),(u,v)).
    \end{align*}
    We show the composition is in $\I((x,y),(u,v))$.
    \begin{align*}
       & (\{(z_k,y_k)\}_{k=1}^p - \{(z'_l,z'_l)\}_{l=1}^q)\circ (\{(x_i,y_i)\}_{i=1}^m - \{(x'_j,y'_j)\}_{j=1}^n) \\
       =& \{(z_k,y_k)\}\circ \{(x_i,y_i)\} - \{(z_k,y_k)\}\circ \{(x'_j,y'_j)\} \\& - \{(z'_l,z'_l)\}\circ \{(x_i,y_i)\} + \{(z'_l,z'_l)\}\circ \{(x'_j,y'_j)\}.
    \end{align*}
    We know 
    \begin{align*}
        \{(z_k,y_k)\}\circ \{(x_i,y_i)\} & \neq \{(z_k,y_k)\}\circ \{(x'_j,y'_j)\} \\
        &\text{and} \\
        \{(z'_l,z'_l)\}\circ \{(x_i,y_i)\} & \neq \{(z'_l,z'_l)\}\circ \{(x'_j,y'_j)\}.
    \end{align*}
    Thus, the composition is in $\I((x,y),(u,v))$.
    
    Consider $\{(x_i,y_i)\}_{i=1}^m - \{(x'_j,y'_j)\}_{j=1}^n\in\I((x,y),(z,w))$.
    By assumption there is $1\leq k \leq m$ and $1\leq l \leq n$ such that $(x_k,y_k)\neq (x'_l,y'_l)$.
    Let
    \begin{align*}
        f:(x,y)\to (x_k,y_k)\oplus(x'_l,y'_l) &= \left[\begin{matrix} \{(x_i,y_i)\}_{i=1}^k \\ - \{(x'_j,y'_j)\}_{j=1}^l \end{matrix} \right] \\
        g:(x_k,y_k)\oplus(x'_l,y'_l) \to (z,w) &= \left[\begin{matrix} \{(x_i,y_i)\}_{i=k}^m & \{(x'_j,y'_j)\}_{j=l}^n \end{matrix} \right].
    \end{align*}
    Notice $f\notin \I((x,y),(x_k,y_k)\oplus(x'_l,y'_l))$, $g\notin \I((x_k,y_k)\oplus(x'_l,y'_l), (z,w))$, and
    \[ \{(x_i,y_i)\}_{i=1}^m - \{(x'_j,y'_j)\}_{j=1}^n = g\circ f.\]
    Thus, every morphism in $\I$ is given by a finite composition of morphisms not in $\I$.
    Furthermore, there are no cycles in $\Q$.
    Thus, $\I$ is an admissible ideal.
    
    The resulting $\Q/\I$ is the category where the objects are fnite direct sums of points in $\R^2$ and Hom spaces between points is given by
    \[
        \Hom_{\Q/\I}((x,y),(z,w)) = \begin{cases}
            \Bbbk & x\leq z \text{ and }y\leq w \\
            0 & \text{otherwise}.
        \end{cases}
    \]
    This means that $\Q/\I$ is the continuous generalization of (finite) discrete commutative grid quivers as examined in \cite{BBOS}.

\appendix

\section{More on Length Relations}\label{apx:length}

In this appendix we first discuss why the proof of \Cref{thm:length relations are admissible} fails if we do not require the weakly archimedian property or the finitely-generated monoid condition (\Cref{subsec:need weakly archimedean}).
Then we discuss why we require that every element $\lambda$ in a chosen $\Lambda$ must be the sum of finitely many elements $\lambda_1,\ldots,\lambda_n$ using an example (\Cref{sec:more on length}).
Finally, we compare  our definition of a category $\C$ having length in $\Lambda$ to the notion of a metric on the category $\C$, originally introduced by Lawvere \cite{L73} (\Cref{subsec:length vs metric}).

\subsection{Discussion of Proof of \Cref{thm:length relations are admissible}}\label{subsec:need weakly archimedean}
We first discuss the weakly Archimedian property and then discussion the assumption on the monoid $\End_{\widehat{\C}}(X)$.

Note the weakly Archimedian property is essential to the proof of \Cref{thm:length relations are admissible}.
If $\Lambda$ were simply a commutative monoid that respected some total order, we would not be guaranteed either of the two properties of an admissible ideal.
We present two examples that fail each of requirements (1) and (2) in \Cref{def:admissible} without failing the other.

First we consider an example where a morphism $f\in\I$ may not be a finite composition of morphisms not in $\I$.
Consider representations of $\R\cup\{+\infty\}$ with length of morphisms defined in the following way.
Suppose $\Lambda = \R_{\geq 0}\cup \{\infty\}$ where $a+_{\R}b$ is the standard addition when $a,b\in\R$ and $a+_{\R}b=\infty$ if $a=\infty$ or $b=\infty$.
The order on $\Lambda$ is the usual order on $\R$ and $x<\infty$ for all $x\in\R$.
Suppose $\Lambda_2=\{\infty\}$.
Then each morphism $f:+\infty\to x$ has $\ell(f)=\infty$ but there is no finite composition $g_n\circ\cdots\circ g_1=f$ where $\ell(g_i)<\infty$ for each $g_i$.
Thus $\ell^{-1}(\Lambda_2)$ fails requirement (1) but satisfies requirement (2).

Now we consider an example where all powers of loops are not in $\I$.
Let $Q$ be a quiver with loops and let $\Lambda=\N\cup\{\infty\}$.
Let $\Q$ be the categorification of $Q$ with length in $\Lambda$ by assigning $\ell(\alpha)=1$ for each arrow in $Q$.
Define $+$ to be the usual addition in $\N$ and $a+b=\infty$ if $a=\infty$ or $b=\infty$.
Further, the total order is the usual order on $\N$ and $n<\infty$ for all $n\in\N$.
Let $\Lambda_2=\{\infty\}$ and note that, for each loop $\alpha$ and for every $n\geq 1\in\N$, we have $\alpha^n\notin\ell^{-1}(\Lambda_2)$.
Thus, $\ell^{-1}(\Lambda_2)$ fails requirement (2) but satisfies requirement (1).

In \Cref{ex:big wedge}, we see that if we have an infinite wedge of circles, and our length relation is longer than the circumference of the circle, then the endomorphism ring of the wedge point in $\Q/\I$ cannot be finite-dimensional.
This is why we add the assumption that $\End_{\widehat{\C}}(X)$ is a finitely-generated monoid for all $X\in\Ob(\widehat{\C})$.

\subsection{More on the Definition of Length Relation}\label{sec:more on length}
Here we explain why we need the definition that we have.

Let $\Q$ be the semi-continuous quiver where we have the ascending real interval $[0,100]$ and an arrow $\alpha:-1\to 0$ (see \Cref{fig:semi-continuous with no length 1}).
If we want to put a length on $\Q$ we run into problems.
Notice the only possible length on a path $x\to y$, for $x,y\in[0,100]$, is a variation on using $y-x$.

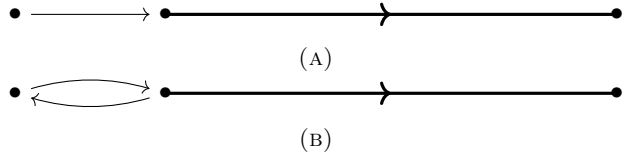
\begin{figure}
    \centering
    \begin{subfigure}[b]{\textwidth}
    \centering
    \begin{tikzpicture}[xscale=2, decoration={markings, mark=at position 0.5 with {\arrow{>}}}
    ]
    \node (l) at (0,0) {$\bullet$};
    \node (m) at (1,0) {$\bullet$};
    \node (r) at (4,0) {$\bullet$};
    \draw[->] (l) -- (m);
    \draw[very thick, postaction=decorate] (m.center) -- (r.center);
    \end{tikzpicture}
    \caption{}
    \label{fig:semi-continuous with no length 1}
    \end{subfigure}
    \\
\begin{subfigure}[b]{\textwidth}
    \centering
    \begin{tikzpicture}[xscale=2, decoration={
    markings,
    mark=at position 0.5 with {\arrow{>}}}
    ]
    \node (l) at (0,0) {$\bullet$};
    \node (m) at (1,0) {$\bullet$};
    \node (r) at (4,0) {$\bullet$};
    \draw[->] (l) to[bend left] (m);
    \draw[->] (m) to[bend left] (l);
    \draw[very thick, postaction=decorate] (m.center) -- (r.center);
    \end{tikzpicture}
    \caption{}
    \label{fig:semi-continuous with no length 2}
\end{subfigure}
    
    \caption{Two semi-continuous quivers $\Q,\Q^+$ ((a),(b), respectively) on which we cannot put a length.}
\end{figure}

We cannot give the discrete arrow $\alpha$ some infinitesimal length.
This would violate property (3) of \Cref{def:weakly archimedean monoid}, since there is no finite sum of infinitesimal lengths that would yield a real length.
We also cannot arbitrarily decide that two times the infinitesimal length (or any finite multiple) is equal to some real number, since our weakly archimedean monoid must be totally ordered and the addition must respect the order.

We cannot assign a real positive length to the arrow $\alpha$, either.
Suppose the length of the arrow is $d$.
Then, for any $\lambda<d \in\R_\geq 0$, we cannot satisfy (3) in \Cref{def:category with length in Lambda}.
If we ignored this problem, then we risk disconnecting our category if we mod out by a length less than $d$.

However, there is hope.
If we are careful, we can mod out by paths that \emph{feel} too long.
For example, one may check that the ideal generated by \[\{x\to y | x,y\in[0,100], y-x>3\} \cup \{(x\to y)\circ \alpha \mid x=0, y-x> 2\}\] is an admissible ideal which makes the arrow \emph{feel} as though it has length 1 when we want to mod out by paths whose lengths feel greater than 3.

Suppose we add an arrow $\beta:0\to -1$.
Call this new semi-continuous quiver $\Q^+$ (see \Cref{fig:semi-continuous with no length 2}).
Then we may produce a new admissible ideal by adding to our previous generating set:
\begin{align*}
    &\{x\to y | x,y\in[0,100], y-x>3\} \cup \{(x\to y)\circ \alpha \mid x=0, y-x> 2\} \\ & \cup \{(x\to y)\circ\alpha\beta \mid x=0, y-x> 1\} \cup \{(x\to y)\circ \alpha\beta\alpha \mid x=0, y\neq x\} \\ &\cup \{\alpha\beta\alpha\beta,\beta\alpha\beta\alpha\}.
\end{align*}
This again gives the feeling that the arrows have length 1 and feels like we're modding out by paths of length greater than 3 without violating our definitions.

\subsection{Metric versus Length}\label{subsec:length vs metric}
    In \cite{L73}, Lawvere introduced metrics on categories by formalising the relationship between the triangle inequality $\dist(b,c)+\dist(a,b) \geq \dist(a,c)$, where $\dist$ is some metric, and morphism composition $\Hom_\C(B,C)\otimes\Hom_\C(A,B)\to \Hom_\C(A,C)$, where $\C$ is some category.
    Lawvere allows for some non-symmetric constructions.
    However, we note that this is \emph{not} where our definition and Lawvere's definition diverge.
    
    We show that our example $\mathcal{Q}^+$ from \Cref{subsec:need weakly archimedean} has a metric that does not have a compatible length function (\Cref{ex:yes metric no length}).
    Then we show there is a category $\mathcal{X}$ with length $\ell$ in $\mathbb{Q}$ that does not have a compatible metric (\Cref{ex:no metric yes length}).
    
    \begin{example}\label{ex:yes metric no length}
    Consider the example $\Q^+$ in \Cref{subsec:need weakly archimedean}.
    There is a canonical (symmetric) metric to put on $\Q^+$ given by setting $\dist(x,y)$ to $|x-y|$ in $\R$ and then taking the different using the Euclidean metric of $\R^n$ for direct sums of points.
    However, as we have noted, this is not a category with length in $\R$.
    \end{example}
    
    \begin{example}\label{ex:no metric yes length}
    Let $\mathcal{X}'$ be the category obtained by considering disjoint closed intervals in $\mathbb{Q}$ of the following infinite sequences of lengths:
    \begin{align*}
        \{x_i\}_0^\infty &= \{3, 3.1, 3.14, 3.141, 3.1415, 3.14159, \ldots\} \\
        \{y_i\}_0^\infty &= \{3, 2.9, 2.86, 2.859, 2.8585, 2.85841, \ldots\}
    \end{align*}
    Specifically, we take disjoint copies of the intervals $[0,x_i]$ and $[0,y_i]$ in $\mathbb{Q}$, for each $x_i$ and $y_i$.
    Notice each $y_i=6-x_i$.
    Furthermore, in $\R$, the limit of $\{x_i\}$ is $\pi$ and the limit of $\{y_i\}$ is $6-\pi$.
    Thus, neither the limit of $\{x_i\}$ nor the limit of $\{y_i\}$ exists in $\mathbb{Q}$.
    
    Now, let $\widehat{\mathcal{X}}$ be the result of
    \begin{itemize}
        \item identifying all $0\in [0,x_i]$ and $0\in[0,y_i]$ together and
        \item identifying all $x_i\in[0,x_i]$ and all $y_i\in[0,y_i]$ together.
    \end{itemize}
    By $0$ we denote the equivalence class of all $0$'s and by $X$ we denote the equivalence class of all $x_i$'s and $y_i$'s Notice $\Hom_{\mathcal{X}}(0,X):=\N\times\{0,1\}$.
    For any other $x,y\in\Ob(\widehat{\mathcal{X}}$, $\Hom_{\mathcal{X}}(x,y):=\Hom_{\mathcal{X}'}(x,y)$.
    Let $\mathcal{X}$ be the additive $\Bbbk$-linearization of $\widehat{\mathcal{X}}$.
    
    Take $\Lambda=\mathbb{Q}$.
    For any pair $(x,y)$ in $\mathcal{X}$ such that $\Hom_{\mathcal{X}}(x,y)\neq 0$ and $(x,y)\neq (0,X)$, define $\ell(f)=|x-y|$ for each $f\in\Hom_{\mathcal{X}}(x,y)$.
    Let $f\in\Hom_{\mathcal{X}}(0,X)$ be nonzero such that $f$ is in copy $(i,j)$ of $\Bbbk$, where $(i,j)\in\N\times{0,1}$.
    Then set \[\ell(f)=\begin{cases} x_i & j=0 \\ y_i & j=1. \end{cases}\]
    We see this generates $\ell$ such that $\mathcal{X}$ has length in $\mathbb{Q}$.
    
    Let $L$ be the set of lengths of paths from $0$ to $X$ in $\mathcal{X}$.
    We see that, in $\mathbb{Q}$, the set $L$ has no infimum (or supremum) so we cannot use this value to define the distance between $0$ and $X$.
    If we define the length between $0$ and $X$ to be greater than $6-\pi$, we see there are infinitely-many paths that cause the triangle inequality to fail.
    Anything less than $6-\pi$ yields a distance incompatible with the length of the morphisms from $0$ to $X$.
    Thus we would need to use the infimum, $6-\pi$, which does not exist in our $\Lambda$.
    Thus, we cannot create a metric $\dist$ on $\mathcal{X}$ that is compatible with the length function $\ell$.
    \end{example}
    
    It is important to note that our specific \Cref{ex:no metric yes length} can be ``fixed'' by taking lengths in $\R$ instead of $\mathbb{Q}$.
    However, the point is to show that if we take an arbitrary $\Lambda$ then there is no guarantee we can construct a compatible metric directly.


\begin{thebibliography}{0}

\bibitem[AH]{AH}
I.~Assem and D.~Happel,
{\it Generalized tilted algebras of type {$A_{n}$}}
Comm. Algebra 9 (1981), no. 20, 2101–2125. \href{https://doi.org/10.1080/00927878108822697}{https://doi.org/10.1080/00927878108822697}.

\bibitem[ASS]{ASS}
I.~Assem, D.~Simson and A.~Skowro\'{n}ski, 
{\it Elements of the representation theory of associative algebras. Vol. 1.
Techniques of representation theory.}
London Mathematical Society Student Texts, 65.
Cambridge University Press, Cambridge, 2006. x+458 pp. \href{https://doi.org/10.1017/CBO9780511614309}{https://doi.org/10.1017/CBO9780511614309}.

\bibitem[AS]{AS}
I.~Assem and A.~Skowro\'{n}ski,
{\it Iterated tilted algebras of type {$\tilde{\bf A}_n$}}
Math. Z. 195 (1987), no. 2, 269–290. \href{https://doi.org/10.1007/BF01166463}{https://doi.org/10.1007/BF01166463}.

\bibitem[ARS]{ARS}
M.~Auslander, I.~Reiten and S.~Smal{\o},
{\it Representation theory of Artin algebras.}
Cambridge Studies in Advanced Mathematics, 36. 
Cambridge University Press, Cambridge, 1995. xiv+423 pp.
\href{https://doi.org/10.1017/CBO9780511623608}{https://doi.org/10.1017/CBO9780511623608}.

\bibitem[BBOS]{BBOS}
U.~Bauer, M.~B.~Botnan, S.~Oppermann, J.~Steen. {\it Cotorsion torsion triples and the representation theory of filtered hierarchical clustering}, Adv. Math. 369 (2020), 107171, 51 pp. 
\href{https://doi.org/10.1016/j.aim.2020.107171}{https://doi.org/10.1016/j.aim.2020.107171}.

\bibitem[BBH]{BBH}
B.~Blanchette, T.~Br\"ustle, and E.~J.~Hanson,
{\it Homological approximations in persistence theory}
arXiv:2112.07632v2 [math.AT] (2022),
\href{https://doi.org/10.48550/arXiv.2112.07632}{https://doi.org/10.48550/arXiv.2112.07632}.

\bibitem[Bo]{Bongartz}
K.~Bongartz, {\it On representation-finite algebras and beyond},  Advances in Representation Theory of Algebras, published by the European Mathematical Society, 2014, 65--101, \href{https://doi.org/10.4171/125-1/3}{https://doi.org/10.4171/125-1/3}.

\bibitem[CdSGO]{CdSGO}
F.~Chazal, V.~de Silva, M.~Glisse, and S.~Oudot, {\it The Structure and Stability of Persistence Modules}, Springer Briefs in Mathematics. Springer, Cham, \href{https://doi.org/10.1007/978-3-319-42545-0_2}{https://doi.org/10.1007/978-3-319-42545-0$\_$2}.

\bibitem[C-B]{C-B}
W.~Crawley-Boevey. {\it Functorial filtrations. II. Clans and the Gel‘fand problem}, J. London Math. Soc. (2) \textbf{40} (1989), no.~1, 9--30.
\href{https://doi.org/10.1112/jlms/s2-40.1.9}{https://doi.org/10.1112/jlms/s2-40.1.9}.

\bibitem[E]{Erdmann}
K.~Erdmann, {\it  Schur algebras of finite type} Quart. J. Math. Oxford Ser. (2) 44 (1993), no. 173, 17–41.
\href{https://doi.org/10.1093/qmath/44.1.17}{https://doi.org/10.1093/qmath/44.1.17}.

\bibitem[G]{Gabriel}
P.~Gabriel, {\it Unzerlegbare Darstellungen. I. } Manuscripta Math. 6 (1972), 71–103; correction, ibid. 6 (1972), 309.
\href{https://doi.org/10.1007/BF01298413}{https://doi.org/10.1007/BF01298413}.

\bibitem[HR]{HR}
E.~Hanson, J.~D.~Rock, {\it Decomposition of Pointwise Finite-Dimensional $\mathbb S^1$ Persistence Modules}, preprint, arXiv:2006.13793. \href{https://doi.org/10.48550/arXiv.2006.13793}{https://doi.org/10.48550/arXiv.2006.13793}.

\bibitem[IRT]{IRT}
K.~Igusa, J.~D.~Rock and G.~Todorov, {\it Continuous Quivers of Type A (I) Foundations}, Rendiconti del Circolo Matematico di Palermo Series 2 (2022). \href{https://doi.org/10.1007/s12215-021-00691-x}{https://doi.org/10.1007/s12215-021-00691-x}.

\bibitem[J]{Jasso}
G.~Jasso, {\it {$n$}-abelian and {$n$}-exact categories}, Math. Z. 283 (2016), no. 3-4, 703–759. \href{https://doi.org/10.1007/s00209-016-1619-8}{https://doi.org/10.1007/s00209-016-1619-8}

\bibitem[Kr]{Krause}
H.~Krause, {\it Krull--Schmidt categories and projective covers}, Expo. Math. 33 (2015), no. 4, 535–549.
\href{https://doi.org/10.1016/j.exmath.2015.10.001}{https://doi.org/10.1016/j.exmath.2015.10.001}

\bibitem[Ku]{Kupisch}
H.~Kupisch, {\it Symmetrische Algebren mit endlich vielen unzerlegbaren Darstellungen. I.}, J. Reine Angew. Math. (1965), no.~219, 1--25, \href{https://doi.org/10.1515/crll.1965.219.1}{https://doi.org/10.1515/crll.1965.219.1}.

\bibitem[Kv]{Kvamme}
S.~Kvamme, {\it Axiomatizing subcategories of {A}belian categories}, J. Pure Appl. Algebra 226 (2022), no. 4, Paper No. 106862, 27 pp. 
\href{https://doi.org/10.1016/j.jpaa.2021.106862}{https://doi.org/10.1016/j.jpaa.2021.106862}

\bibitem[L]{L73}
F.~W.~Lawvere, {\it Metric spaces, generalized logic, and closed categories}, Rend. Sem. Mat. Fis. Milano 43 (1974), 135--166.
\href{https://doi.org/10.1007/BF02924844}{https://doi.org/10.1007/BF02924844}

\bibitem[R]{Ringel}
C.~M.~Ringel, {\it The preprojective algebra of a quiver}, Algebras and modules, II (Geiranger, 1996), 467–480, 
CMS Conf. Proc., 24, Amer. Math. Soc., Providence, RI, 1998.  

\bibitem[RZ]{RZ}
J.~D.~Rock and S.~Zhu, {\it Continuous Nakayama Representations}, \href{https://doi.org/10.48550/arXiv.2207.03908}{https://doi.org/10.48550/arXiv.2207.03908}

\bibitem[SY]{SY}
A.~Skowro\'{n}ski, K.~Yamagata, {\it Frobenius algebras. I. }
Basic representation theory. EMS Textbooks in Mathematics. European Mathematical Society (EMS), Zürich, 2011. xii+650 pp. ISBN: 978-3-03719-102-6 
\href{https://doi.org/10.4171/102}{https://doi.org/10.4171/102}

\bibitem[S]{Street}
R.~Street, {\it Ideals, radicals, and structure of additive categories}, Appl. Categ. Structures 3 (1995), no. 2, 139–149. 
\href{https://doi.org/10.1007/BF00877633}{https://doi.org/10.1007/BF00877633}

\bibitem[V]{Vaso}
L.~Vaso, {\it $n$-cluster tilting subcategories of representation-directed
   algebras}, J. Pure Appl. Algebra 223 (2019), no. 5, 2101–2122.
\href{https://doi.org/10.1016/j.jpaa.2018.07.010}{https://doi.org/10.1016/j.jpaa.2018.07.010}

\end{thebibliography}
\end{document}